\newtheorem{theorem}{Theorem}
\newtheorem{prop}{Proposition}[section]
\newtheorem{lemma}{Lemma}[section]
\numberwithin{equation}{section}
\newtheorem*{Question}{Question}
\def\supp{{\rm supp\,}}
\def\dist{{\rm dist}}
\def\sgn{{\rm sgn}}
\def\ZQ{\ensuremath{\mathcal Q}}
\def\ZZ{\ensuremath{\mathbb Z}}
\def\ZR{\ensuremath{\mathbb R}}
\def\ZT{\ensuremath{\mathbb T}}
\def\Z1{\ensuremath{\mathbf 1}}
\newcommand {\e }[1]{(\ref{#1})}
\newcommand {\lem }[1]{Lemma \ref{#1}}
\newcommand {\trm }[1]{Theorem \ref{#1}}
\newcommand {\sect }[1]{Section \ref{#1}}
\newcommand {\subsect }[1]{Subsection \ref{#1}}
\newcommand\numberthis{\addtocounter{equation}{1}\tag{\theequation}}
\author{Gevorg Mnatsakanyan}
\title[on convergence of MT series]{On almost-everywhere convergence of Malmquist-Takenaka Series}
\address{Mathematical Institute, University of Bonn, Endenicher Allee 60, 53115 Bonn, Germany}
\curraddr{}
\email{gevorg@math.uni-bonn.de}
\subjclass[2020]{Primary 42A20, 42A50}
\keywords{Takenaka Basis, Phase unwinding, Time-Frequency Analysis, Carleson's Theorem}
\definecolor{midnightblue}{HTML}{0059b3}
\definecolor{chromered}{HTML}{f14233}
\begin{document}

\maketitle

\begin{abstract}
The Malmquist-Takenaka system is a perturbation of the classical trigonometric system, where powers of $z$ are replaced by products of other M\"obius transforms of the disc. The system is also inherently connected to the so-called nonlinear phase unwinding decomposition which has been in the center of some recent activity.
We prove $L^p$ bounds for the maximal partial sum operator of the Malmquist-Takenaka series under additional assumptions on the zeros of the M\"obius transforms. We locate the problem in the time-frequency setting and, in particular, we connect it to the polynomial Carleson theorem.
\end{abstract}

\section{Introduction}

Let $F$ be a holomorphic function in a neighborhood of the closed unit disc. We consider the holomorphic function $F_1$ defined on the same neighborhood by
$$F(z) = F(0) + z F_1(z).$$ Iterating this process, one obtains the Fourier series on the unit circle. We can consider a more general iterative process where we replace the factor $z$ by a different M\"obius transform of the unit disc vanishing at point $a_n$,
$$F_n(z) = F_{n} ( a_n ) + \frac{z- a_n}{1 - \overline{a_n}z} F_{n+1} (z).$$

Consider a sequence of points $(a_n)_{n=1}^{\infty}$ inside the unit disk such that
\begin{equation}\label{basiscondition}
\sum_{n=1}^{\infty} ( 1 - |a_n| ) = +\infty.
\end{equation}
The associated Blaschke products and Malmquist-Takenaka (MT) basis are defined as
\begin{equation}\label{BlaschkeMT}
B_n (z) = \prod_{j=1}^n \frac{z - a_j}{1 - \overline{a_j} z }, \quad\quad \phi_n (z) = B_{n} (z) \frac{ \sqrt{1 - | a_{n+1} |^2 } }{ 1 - \overline{ a_{n+1} } z }.
\end{equation}
Unlike the case of Fourier series, $B_n$'s are not orthonormal, but $\phi_n$'s are due to the normalization (see Lemma 3 in  \cite{Coif2}).

It was shown by Coifman and Peyri\'{e}re  \cite{Coif2}  that $(\phi _n)_{n=0}^{\infty}$ is a basis in the Hardy spaces $H^p( \ZT )$, $1 < p < \infty$ in the following sense. With the inner product defined for two functions $f, g$ on the unit circle $\ZT$ as
\begin{equation*}
\langle f, g \rangle = \frac{1}{2 \pi} \int_{-\pi }^{\pi} f(e^{i\theta }) \overline{g(e^{i \theta} )} d\theta,
\end{equation*}
we have that the MT series converges in $H^p$, i.e.
\begin{equation}\label{MTseries}
\sum_{n=0}^N \langle f , \phi_n \rangle \phi_n \xrightarrow[N\to \infty]{ H^p} f.
\end{equation}

There is a natural connection between the MT series and the non-linear phase unwinding decomposition introduced in the dissertation of Nahon  \cite{Nah}. For a function $F$ in $H^p(\mathbb{T})$ one can consider the Blaschke factorization $$F(z) - F(0) = B(z)F_1(z),$$ where $B$ is a Blaschke product and $F_1$ is in $H^p(\mathbb{T})$ and does not have zeros in $\mathbb{D}$. Iterating the procedure, one obtains the formal unwinding series
\begin{equation}\label{unwinding}
    F = F(0) B + F_1(0) B B_1 + \cdots + F_n(0) B B_1 \dots B_n + \cdots.
\end{equation}
Numerical simulations from   \cite{Nah} suggest that the right-hand side of \e{unwinding} converges back to the function and generally this happens at exponential rate. The result of Coifman and Peyri\'ere implies convergence in $H^p$. The case $p=2$ was previously obtained by Qian   \cite{Qian}, who had developed a similar theory to phase unwinding independently of Nahon in   \cite{Qian2}. Coifman and Steinerberger   \cite{Coif1} proved convergence in several different contexts including convergence in fractional Sobolev spaces $H^s$, $s>-\frac{1}{2}$ if the initial function $F$ is in $H^{s+\frac{1}{2}}$.

If at each step of the unwinding decomposition \e{unwinding} the Blaschke product $B_n$ has finitely many zeros, for example if $F$ is holomorphic in an $\epsilon$-neighborhood of the unit disk, we can consider the sequence of all zeros of $B, B_1, \dots$. The associated MT series will then reproduce the unwinding decomposition. Intuitively, making the zeros adapted to the function should accelerate the convergence. For this reason, the MT series is also called the Adaptive Fourier Transform.
For an overview of these constructions, we refer to the recent paper   \cite{Coif3}. For some further results and intuition on the unwinding decomposition we refer to   \cite{Steinerberger1, steinerberger2}.

We are interested in almost everywhere convergence of the MT series \e{MTseries}. By standard techniques, almost everywhere convergence can be deduced from estimates of the maximal partial sum operator. Denote
\begin{equation}\label{maxoperator}
Tf(e^{ix}) := T^{(a_n)} f ( e^{ix} ) := \sup_n | \sum_{n=0}^N \langle f , \phi_n \rangle \phi_n (e^{ix}) |.
\end{equation}

\begin{Question}
Is the maximal partial sum operator \e{maxoperator} bounded on $L^p$?
\end{Question}

If $a_n \equiv 0$, then the MT series reduces to the classical Fourier series and the operator \e{maxoperator} reduces to the Carleson operator. In this case the positive answer to the above question is given by the Carleson-Hunt theorem   \cite{Carleson, Hunt}.

We give two partial answers to this question in this paper. First, if the points are in a compact disc inside the open unit disc, the problem becomes a more benign perturbation of the Carleson-Hunt theorem. In this case, we quantify the $L^p$ norm of the maximal partial sum operator depending on the distance of the compact disc from the unit circle.
\begin{theorem}\label{compactcase}
Let $0< r<1$ and let $(a_n)_{n=1}^{\infty}$ be an arbitrary sequence such that $|a_n| \leq r$ for all $n$. Then, for $1< p < 2$,
\begin{equation}\label{logblowupp<2}
\| T^{(a_n)} \|_{L^p (\ZT ) \to L^p (\ZT ) } \lesssim_p \log \frac{1}{1-r}.
\end{equation}
For $2\leq p < \infty$, we have the better estimate
\begin{equation}\label{logblowupp>2}
\| T^{(a_n)} \|_{L^p (\ZT ) \to L^p (\ZT ) } \lesssim_p \sqrt{ \log \frac{1}{1-r} }.
\end{equation}
Furthermore, for $1< p \leq 2$, we have a lower bound in the sense, that for every $0 < r < 1$ there exists a sequence $(a_n)$ with $|a_n| \leq r$ such that
\begin{equation}\label{logblowupcounterexample}
    \| T^{(a_n)} \|_{L^p (\ZT ) \to L^p (\ZT ) } \gtrsim \sqrt{ \log \frac{1}{1-r} }.
\end{equation}
In particular, the bounds \e{logblowupp>2} and \e{logblowupcounterexample} are sharp for $p=2$.
\end{theorem}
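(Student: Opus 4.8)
The plan is to reduce the maximal operator $T^{(a_n)}$ to the Carleson operator plus controllable error terms, exploiting that the zeros $a_n$ lie in a compact disc of radius $r<1$. The key structural fact is that each Blaschke factor $b_j(z) = \frac{z-a_j}{1-\overline{a_j}z}$ equals $z$ times a function that is holomorphic and \emph{non-vanishing} in a neighborhood of $\overline{\mathbb D}$ whose size is governed by $1-r$; writing $b_j(z) = z \cdot e^{g_j(z)}$ with $g_j$ holomorphic, one sees $\|g_j\|_\infty \lesssim (1-|a_j|)$ and, crucially, $\sum_j (1-|a_j|)$ may still diverge, so the total phase distortion $\sum_{j\le n} g_j$ is not uniformly bounded --- this is exactly where the logarithm enters. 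The first step is therefore to make this decomposition quantitative: set $B_n(z) = z^n E_n(z)$ where $E_n = \prod_{j\le n} e^{g_j}$, and record that on $\mathbb T$ the function $E_n$ has modulus $1$ and a real phase $\psi_n(x) = \sum_{j\le n}\operatorname{Im} g_j(e^{ix})$ whose derivative satisfies $\|\psi_n'\|_\infty \lesssim \sum_{j\le n}(1-|a_j|)$, but whose \emph{bounded} part (after subtracting an affine-in-$n$ piece) can be controlled in a way that only costs $\log\frac1{1-r}$ in the relevant norms.

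The second step is to expand $\langle f,\phi_n\rangle \phi_n$ using $\phi_n = B_n \cdot \frac{\sqrt{1-|a_{n+1}|^2}}{1-\overline{a_{n+1}}z}$ and to telescope the partial sums $S_N f = \sum_{n=0}^N \langle f,\phi_n\rangle\phi_n$. Because the $\phi_n$ form an orthonormal basis, $S_N$ is an orthogonal projection onto $\operatorname{span}\{\phi_0,\dots,\phi_N\} = B_{N+1}\cdot \mathcal P_{N}$ in a suitable sense, and one can write $S_N f$ in closed form via a reproducing-kernel / Christoffel--Darboux-type identity; after conjugating by the unimodular function $\overline{E_{N+1}(e^{ix})}$ this becomes a Fourier partial sum of the modulated function $E_{N+1}f$, up to boundary terms coming from the Möbius weight $\frac{\sqrt{1-|a_{N+1}|^2}}{1-\overline{a_{N+1}}z}$ (which is itself an $L^p$-bounded operator with norm $O(1)$ since $|a_{N+1}|\le r$, with the $(1-r)$-dependence only through a single averaging). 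Taking the supremum over $N$, one is led to bound $\sup_N |\mathcal C_{\psi_N}(E_{N+1} f)|$ where $\mathcal C_\psi$ denotes a Carleson-type operator with an extra modulation by $e^{i\psi}$; since the $\psi_N$ vary with $N$, this is where we must either quote the polynomial Carleson theorem or, more economically in this compact regime, split $\psi_N$ into a piece handled by a linear frequency shift (absorbed into the Carleson sup) and a remainder of bounded variation $\lesssim \log\frac1{1-r}$, estimated by summation by parts against the Carleson maximal function.

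For the quantitative upper bounds \eqref{logblowupp<2} and \eqref{logblowupp>2}, the third step is bookkeeping: for $1<p<2$ one loses a full $\log\frac1{1-r}$ because the bounded-variation remainder is summed against $L^p$ norms with no cancellation, giving the linear dependence; for $p\ge 2$ one gains a square root by an $L^2$-orthogonality (almost-orthogonality of the dyadic frequency blocks of $\psi_N$) interpolated against a crude $L^\infty$-type bound, i.e.\ a square-function estimate replaces the triangle inequality. The main obstacle I expect is precisely this dichotomy: proving that the error operator genuinely has the $\sqrt{\log}$ behaviour for $p\ge 2$ rather than the trivial $\log$, which requires an honest almost-orthogonality argument for the family $\{E_n\}_n$ along the scales where $\sum(1-|a_j|)$ accumulates, and making sure the Carleson operator is applied only once (not $N$ times) so that its norm does not itself contribute a growing factor.

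Finally, for the lower bound \eqref{logblowupcounterexample} with $1<p\le2$, the plan is to choose the $a_n$ clustered near a single point (say all equal, $a_n\equiv a$ with $|a|=r$, repeated $m\approx \frac{1}{1-r}\log\frac1{1-r}$ times, then $0$ thereafter) so that $B_n = b_a^n$ and the phase $\psi_n = n\cdot\arg b_a(e^{ix})$ genuinely winds; testing $T^{(a_n)}$ against a lacunary or Dirichlet-kernel-type test function adapted to the critical scale $1-r$ near the point $e^{ix_0}$ where $b_a$ has its phase concentration, one computes that the partial sums exhibit a resonance of size $\sqrt{\log\frac1{1-r}}$ in $L^p$, mirroring the classical $L^2$ lower bound for the Carleson operator restricted to that scale. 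The delicate point here is verifying the $L^p$ (not just $L^2$) lower bound, which forces the test function to be spread out enough that its $L^p$ norm does not absorb the gain --- a standard but careful randomization or explicit-kernel computation.
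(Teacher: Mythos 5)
Your opening reduction---writing the partial sums in closed form as a maximally modulated Hilbert transform---is indeed how the paper begins (\e{operatorclosedform}, \e{maxoperator2}), but the quantitative core of your upper-bound argument does not survive. The factorization $b_j(z)=z\,e^{g_j(z)}$ with $g_j$ holomorphic and non-vanishing near $\overline{\mathbb{D}}$ does not exist ($b_j(z)/z$ vanishes at $a_j$ and has a pole at $0$), and on $\ZT$ the phase correction $2\arg(1-a_je^{-ix})$ in \e{phasedefined} has sup norm of order $1$, not $1-|a_j|$, with derivative of order $(1-|a_j|)^{-1}$ near $x=\arg a_j$ by \e{phasederiv}. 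Consequently the deviation of $\theta_N$ from a linear phase has sup norm and total variation of order $N$, and $N$ is unbounded, so neither absorbing it into the Carleson supremum nor a summation by parts against a bounded-variation remainder ``of size $\log\frac{1}{1-r}$'' can work uniformly in $N$; nor can the polynomial Carleson theorem be quoted as a black box, since the $\theta_N$ are not polynomials. The paper instead splits at scale $1-r$: for the scales below $1-r$ it proves a generalization of Zorin-Kranich's polynomial Carleson theorem under axioms A--C, verified for the M\"obius phases via \lem{decomp} and \lem{oscillatory}; for the remaining $R=\log\frac{1}{1-r}$ scales it uses a triangle inequality (giving \e{logblowupp<2}) and, for $p=2$, a $TT^*$ argument in which the circular Hilbert transform of the holomorphic modulation $e^{i\theta}$ is integrated exactly, producing the $\sqrt{R}$ bound; the case $p>2$ then follows from sparse domination. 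Your ``almost-orthogonality of dyadic frequency blocks'' is, as stated, not a substitute for that mechanism.

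The lower-bound construction you propose (all $a_n$ equal to a single point $a$ with $|a|=r$) provably cannot give any blow-up: by the M\"obius invariance \e{Mobius} proved in the Appendix, the operator for the constant sequence $a$ has the same $L^2$ norm as the one for the zero sequence, i.e.\ the classical Carleson operator, so its norm is $O(1)$, and by \e{invariancewithloss} it stays bounded on $L^p$ as well. The $\sqrt{\log\frac{1}{1-r}}$ lower bound genuinely requires points with spread-out arguments: the paper takes $a_k=re^{iMk(1-r)}$ for $0\le k\le e^M$ with $Me^M(1-r)\sim 1$, a choice function constant on the arcs $J_k$, and a test function equal to $1$ on every other arc. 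The gain comes from \lem{phaseasymptotics}: $\Psi_r(j(1-r))$ differs from $\pi$ by roughly $1/j$, so on the $k$-th arc $(\theta_{k'}-\theta_k)$ equals $\pi(k'-k)+\frac{\log(k'-k)}{M}+O(\frac1M)$, and after integrating the Hilbert kernel exactly the off-diagonal terms $\sum_{k<k'}\frac{\log(k'-k)}{k'-k}$ all carry one sign and sum to $\gtrsim M\|g\|_2^2$; the range $1<p<2$ then follows by H\"older since $\|g\|_p\sim\|g\|_2$. This logarithmic accumulation across many arcs is exactly what a single-point resonance cannot produce.
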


There is a conformally invariant version of \trm{compactcase} for $p=2$ and arbitrary compact sets inside the disc. In that case, the quantity $\log \frac{1}{1-r}$ is replaced by the hyperbolic diameter of the compact set. In the Appendix, we prove this invariance and discuss the situation for $p$ other than $2$.

We turn to the second partial answer, the case when the points are in a non-tangential approach region to the boundary.
\begin{theorem}\label{nontangentialcase}
Let $a_n$ be inside the triangle with vertices $(1,0), (\frac{1}{2}, \frac{1}{2})$ and $(\frac{1}{2}, -\frac{1}{2})$ for all $n$, then
\begin{equation}
\| T \|_{L^2 (\ZT ) \to L^2 (\ZT )} \lesssim 1.
\end{equation}
\end{theorem}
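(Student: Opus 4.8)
The plan is to begin with the Christoffel--Darboux formula for the Malmquist--Takenaka system. Since $\phi_0,\dots,\phi_N$ form an orthonormal basis of the model space $K_{B_{N+1}}=H^2\ominus B_{N+1}H^2$, whose reproducing kernel is $\bigl(1-\overline{B_{N+1}(w)}B_{N+1}(z)\bigr)/(1-\overline{w}z)$, the partial sum $S_N f=\sum_{n=0}^N\langle f,\phi_n\rangle\phi_n$ is the orthogonal projection of $f$ onto $K_{B_{N+1}}$, and for $f\in H^2$ one has $S_N f=f-B_{N+1}P_+\!\bigl(\overline{B_{N+1}}f\bigr)$, where $P_+\colon L^2(\ZT)\to H^2$ is the orthogonal projection. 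Because $|B_{N+1}|=1$ on $\ZT$, and because the Malmquist--Takenaka coefficients of $f$ only depend on $P_+f$, proving \trm{nontangentialcase} is equivalent to the $L^2$ bound
\begin{equation*}
\Bigl\|\,\sup_N\bigl|P_+\!\bigl(\overline{B_N}f\bigr)\bigr|\,\Bigr\|_{L^2(\ZT)}\lesssim\|f\|_{L^2(\ZT)},\qquad f\in H^2 .
\end{equation*}
This is a maximal operator of Carleson type, with the linear modulations $e^{-inx}$ of the classical Carleson operator replaced by the unimodular factors $\overline{B_N(e^{ix})}=e^{-i\beta_N(x)}$, where $\beta_N$ is the continuous increasing argument of $B_N$ on $\ZT$. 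The key structural point is that the instantaneous frequency $\beta_N'(x)=\sum_{j=1}^N P_{a_j}(x)$, with $P_{a_j}(x)=(1-|a_j|^2)/|e^{ix}-a_j|^2$, is a positive superposition of Poisson bumps, and that the non-tangential hypothesis forces $1-|a_j|\asymp|1-a_j|$: each bump then has height $\asymp(1-|a_j|)^{-1}$, width $\asymp 1-|a_j|$, and is centred within $O(1-|a_j|)$ of the single boundary point $1$.

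Next I would localize around the accumulation point $1$, decomposing $\ZT\setminus\{1\}$ into Whitney arcs $I_k$ at distance $\asymp 2^{-k}$ from $1$, $k\ge0$. The numerator $1-e^{i(\beta_N(x)-\beta_N(y))}$ of the Christoffel--Darboux kernel is bounded, so interactions between a far-away input arc and an output arc are governed by the off-diagonal decay of $(1-e^{i(x-y)})^{-1}$ and produce geometrically convergent tails. On a fixed arc $I_k$, rescaling by $2^k$ (equivalently, exploiting the dilation symmetry of the non-tangential cone at $1$ in the half-plane model) exhibits three regimes, according to the size of $\beta_N'$ on $I_k$ compared with the dual frequency $2^k$. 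The zeros with $1-|a_j|\asymp 2^{-k}$ are resolved by the rescaling and behave as in the compact-disc situation of \trm{compactcase}; the much finer zeros contribute on $I_k$ a phase $\approx -\bigl(\sum_{1-|a_j|\ll 2^{-k}}(1-|a_j|)\bigr)/x$; and the much coarser zeros contribute a slowly varying phase. When $\beta_N'\gg 2^k$ on $I_k$ the modulation oscillates faster than the kernel scale, and a non-stationary phase / integration-by-parts estimate (valid because $\beta_N'>0$, so there are no critical points) renders the contribution negligible; when $\beta_N'\ll 2^k$ the factor $\overline{B_N}$ is essentially constant on $I_k$ and the operator collapses to a truncated Hilbert transform; and in the resonant regime $\beta_N'\asymp 2^k$ one checks that the finer-zero phase and the higher-order Taylor coefficients of $\beta_N$ on $I_k$ are all $O(1)$, so that the local operator is dominated, uniformly in $k$ and $N$, by the classical (or, at worst, a bounded-degree polynomial) Carleson operator at scale $2^{-k}$, to which the Carleson--Hunt theorem (respectively the polynomial Carleson theorem) applies.

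Finally I would reassemble the pieces. The output arcs $I_k$ are pairwise disjoint, so the global $L^2$ norm is $\asymp$ the $\ell^2$-sum over $k$ of the pieces on each $I_k$, and the Whitney-arc bounds together with the geometric decay of the cross terms give $\sum_k\|\,\sup_N|P_+(\overline{B_N}f)|\,\|_{L^2(I_k)}^2\lesssim\sum_k\|f\|_{L^2(\widetilde I_k)}^2\lesssim\|f\|_{L^2(\ZT)}^2$ for a bounded enlargement $\widetilde I_k$ of $I_k$. This orthogonal summation across scales is where the restriction $p=2$ is used, and is the reason the argument does not obviously extend to other exponents. The main obstacle is the uniform local estimate on the Whitney arcs: for a fixed $N$ the zeros living at scales other than $2^{-k}$ contaminate the modulation on $I_k$, and one must show --- using that the non-tangential hypothesis aligns every $a_j$ towards the single point $1$ --- that in each regime this contamination is either negligible or a controlled low-degree polynomial phase, so that one application of the (polynomial) Carleson theorem per arc suffices with constants independent of $k$, of $N$, and of the configuration of zeros. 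Making the transitions between the three regimes quantitative, and tracking these constants, is the technical heart of the argument.
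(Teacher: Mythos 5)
Your overall architecture matches the paper's: rewrite the partial sums as a maximally modulated Hilbert transform with the Blaschke phase, decompose around the accumulation point $1$ into lacunary (Whitney) arcs, prove uniform local Carleson-type estimates on each arc, and sum with off-diagonal decay in $\ell^2$ --- and your observation that the cone condition forces $1-|a_j|\asymp|1-a_j|$ is exactly the geometric input the paper uses (inequalities \e{trivialcaseadaption} and \e{adaption}, leading to the derivative comparability \e{derratio3}). The gap is in the core local estimate. You claim that in the resonant regime the modulation family on an arc $I_k$ is dominated, uniformly in $N$ and in the configuration of zeros, by the classical or a bounded-degree polynomial Carleson operator. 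This is not justified and, as stated, fails: the phases $\theta_N$ restricted to $I_k$ are sums of Poisson-type (arctangent) phases, and nothing in the hypotheses bounds the number of zeros living at the scale of the arc. Taking $m$ zeros with $1-|a_j|\asymp 2^{-k}$ and argument near $1$, the nonlinear part of $\theta_N$ on $I_k$ has oscillation growing like $m$, so its distance (in the relevant sup-of-oscillation metric) to \emph{any} fixed finite-dimensional polynomial class is unbounded; no fixed-degree polynomial Carleson theorem applies uniformly. This is precisely why the paper does not reduce to Lie's theorem but instead proves an axiomatic generalization (\trm{Pavel}, following Zorin-Kranich) in which the polynomial class is replaced by the one-parameter family $\{\sum_{j\le n}\psi_{a_j}\}$ equipped with the metric \e{metric}, and then verifies the comparability/doubling conditions A--B via \e{derratio3} and the Van der Corput condition C via \lem{oscillatory}; the non-tangential hypothesis is what makes these verifications possible on every arc.

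A second, related weakness: disposing of the ``fast'' regime ($\beta_N'\gg 2^k$) by non-stationary phase and of the ``slow'' regime by freezing $\overline{B_N}$ are steps that occur \emph{inside} a Carleson-type proof (they correspond to the tree/van der Corput estimates), not substitutes for one, because the linearizing choice $N(x)$ mixes all regimes pointwise and the supremum cannot be discarded scale by scale. Your cross-term and $\ell^2$-reassembly step is sound in spirit, but the paper obtains the needed off-diagonal decay $2^{-\alpha|m-j|}$ from the maximal-function-localized estimates \e{localized1}--\e{localized2} of \trm{Pavel} rather than from kernel decay alone, which is what makes the summation work also for neighboring arcs. In short, the skeleton of your argument is the paper's, but the ``technical heart'' you defer --- a local modulated-Carleson estimate uniform in $k$, $N$ and the zero configuration --- cannot be black-boxed from the classical or polynomial Carleson theorem; it requires the generalized theorem of Section 2 (or an equivalent time-frequency argument for this specific curve of phases).
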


\trm{compactcase} and \trm{nontangentialcase} turn out to be closely related to the polynomial Carleson theorem. Let us recall a special case of the polynomial Carleson operator   \cite{Lie1,Lie3, Pav}
\begin{equation*}
C_d f (x) := \sup_{Q} \sup_{0 < \epsilon < N} \big| \int_{\epsilon < |x-y| < N} f(y) e^{iQ(y)} \frac{dy}{x-y} \big|,
\end{equation*}
where the first supremum is taken over polynomials of degree at most $d$. The case $d= 1$ is the classical Carleson operator. Its weak $L^2$ bounds were implicit in Carleson's paper  \cite{Carleson} on almost everywhere convergence of the Fourier series, Hunt improved this to $L^p$ bounds, $1<p<\infty$. Alternative approaches appeared in Fefferman  \cite{Fef}, Lacey and Thiele  \cite{LT}. On the other hand, Stein and Wainger  \cite{SW} proved the case $d \geq 2$ but restricted to polynomials without the linear term. Lie combined the two techniques in  \cite{Lie1, Lie3} to prove the general $L^p$ bounds for $C_d$. Finally, Zorin-Kranich generalized the argument to higher dimensions and non-convolution Calderon-Zygmund kernels in  \cite{Pav}. For a gentle introduction to Carleson's theorem and a discussion of the different approaches we refer to Demeter's paper  \cite{Demeter}.

In \trm{compactcase}, we first obtain the logarithmic dependence \e{logblowupp>2} for all exponents by using the techniques of the polynomial Carleson theorem for the small scales and a triangle inequality for the large scales. Then, for $p=2$ we are able to improve the estimate for the large scales using a $TT^*$ argument and the analyticity of Blaschke products. The bound \e{logblowupp>2} for $p>2$ follows by black-boxing sparse domination results for Carleson-type opeators such as Theorems 9.1 and 9.2 in  \cite{Kar}. We refer to the bibliography of the latter for more references on sparse domination. Whether \e{logblowupp<2} and \e{logblowupp>2} are sharp we do not know.

\trm{nontangentialcase} is true for similar non-tangential approach regions to other points on the circle. Furthermore, if one takes the union of $k$ approach regions for $k$ distinct points at once, then it is possible to prove along the lines of \trm{nontangentialcase} that the $L^2$ norm of the operator \e{maxoperator} is bounded by $k$. If the boundary points for the approach regions are chosen to be equidistant, then the construction giving \e{logblowupcounterexample} also provides the lower bound $\sqrt{\log k}$. The sharp bound for this configuration is again unknown to us. It could also be interesting to consider approach regions to countably many points for various configurations.

The paper is organized as follows. In Section 2, we generalize the polynomial Carleson theorem in a way to suit our setting. In Section 3, we establish some properties for partial sums of MT series and for Blaschke phases. In Section 4, we prove the upper bounds of \trm{compactcase} and in Section 5 we prove \trm{nontangentialcase}. In section 6, we construct a counterexample for the lower bound of \trm{compactcase}. Finally, in the Appendix, we discuss the invariance of the problem under conformal maps.

We write $a \lesssim b$ if $a \leq c b$ for an absolute constant $c$, and $a\sim b$ if $a\lesssim b$ and $b\lesssim a$. If $c$ depends on parameters $\mathcal{C}$, we write $a \lesssim_{\mathcal{C}} b$.

\section*{Acknowledgments}
I am grateful to my advisor Christoph Thiele for posing the problem and constant support at every stage of this project. Also I thank Pavel Zorin-Kranich for numerous helpful discussion.

The author acknowledges support of the Hausdorff Center for Mathematics, funded by the DFG under Germany's Excellence Strategy - EXC-2047/1 - 390685813 as well as CRC 1060.

\section{Generalization of polynomial Carleson theorem}
In this section we first lay out an axiomatic approach to the polynomial Carleson theorem following  \cite{Pav} and restate it as \trm{Pavel}. We, then, give the list of modifications one has to make in  \cite{Pav} to prove our theorem. For that reason we also try to preserve the notation of  \cite{Pav}.

\subsection{General Setting}\label{generalsetting}
Assume $C_1 > 0$ is some constant. Let $K$ be a translation-invariant Calder\'on-Zygmund kernel on $\ZR$, that is, a function
$K : \ZR\setminus \{0\} \to \mathbb{C}$ such that for $x \neq 0$
\begin{align*}
& | K(x) | \leq C_1 |x|^{-1},\quad | K'(x) | \leq C_1 | x |^{-2},
\end{align*}
and the associated operator is bounded on $\mathcal{L} ( L^2 (\ZR) )$ by $C_1$. We further assume  that there exists an $C_2>0$ such that
\begin{equation}
   \supp K \subset [- C_2/2 , C_2/2 ].
\end{equation}

Let $\ZQ$ be a countable subset of $C^2( \ZR )$. For each interval $I$ and $P, Q \in \ZQ$ we define
\begin{equation}\label{metric}
d_I (P,Q) := \sup_{x,y \in I} | (P - Q)(x) - (P - Q)(y) |,
\end{equation}
and assume that $d_I$ is a metric on $\ZQ$.
We impose the following conditions on $\ZQ$. Assume that there exists a constant $C_0 > 0$ such that
\begin{enumerate}
\item[A.] (Lemma 2.6 in  \cite{Pav}) for any intervals $J \subset I$ with $|I| \leq C_2$ and $P, Q \in \ZQ$ we have
\begin{align}
& d_I (P, Q) \leq C_0 \frac{|I|}{|J|} d_J (P, Q), \\
& d_J (P, Q) \leq C_0 \frac{|J|}{|I|} d_I (P, Q); \label{nestedintervalsSmaller<Larger}
\end{align}
\item[B.] (John-Ellipsoid Property) for any $\lambda>1$ and interval $I$ with $|I| \leq C_2$, any $(\lambda, d_I )$-ball can be covered by $C_0 \lambda$ number of $(1, d_I )$-balls;
\item[C.]\label{Corput} (Lemma A.1 in  \cite{Pav}) for any measurable function $g : \ZR \to \mathbb{C}$, interval $J\subset \ZR$ with $\supp g \subset J $ and $|J| \leq C_2$, and any $Q, P \in \ZQ$ we have
\begin{equation*}
\big| \int_{J} e^{i (P - Q)(x)} g (x) dx \big| \leq C_0 \sup_{| y | < (1 + d_J (P, Q) ) ^{-1} |J| } \int _{\ZR} | g (x) - g (x-y)| dx.
\end{equation*}
\end{enumerate}

\begin{theorem}\label{Pavel}
Assume conditions $A,B$ and $C$ hold for the set $\ZQ$. We define the operator $T:\: L^1_{loc}(\ZR ) \to L^0 (\ZR)$ as
\begin{equation}\label{operator}
Tf (x) := \sup_{Q \in \ZQ } \sup_{0 < \epsilon < N } | \int_{\epsilon < |y| <N} f(y) e^{iQ(y)} K(x-y) dy|.
\end{equation}
Further, let $0 \leq \alpha < 1/2$ and $0\leq \nu, \kappa < \infty$. Let $F,G \subset \ZR$ be measurable subsets and $\tilde{F} := \{ M \Z1_F > \kappa \}, \tilde{G} := \{ M \Z1 _G > \nu \}$. Then the following inequalities hold(with the implicit constants independent of $M$)
\begin{align}
\| T \|_{L^2 (\ZR) \to L^2(\ZR)} & \lesssim_{C_1, C_0} 1, \label{global}\\
\| \Z1_G T \Z1_{\ZR\setminus \tilde{G}} \|_{L^2 (\ZR) \to L^2(\ZR)} & \lesssim_{C_1, C_0, \alpha} \nu^{\alpha}, \label{localized1}\\
\| \Z1_{\ZR \setminus \tilde{F} } T \Z1_F \|_{L^2 (\ZR) \to L^2(\ZR)} & \lesssim_{C_1, C_0, \alpha} \kappa^{\alpha}. \label{localized2}
\end{align}
\end{theorem}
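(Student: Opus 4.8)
The plan is to follow the proof of the polynomial Carleson theorem in \cite{Pav} essentially line by line, the point being that the finite-dimensional vector-space structure of the polynomials is used there only through the three consequences that have been isolated as conditions A, B and C. Concretely, I would (i) discretize $T$ by a wave-packet decomposition into a model sum over tiles, (ii) prove a single-tree estimate, (iii) run the greedy tree-selection together with a Bessel-type estimate for tree tops in order to sum over trees, and (iv) read off the global bound \e{global} and insert exceptional sets to obtain the localized bounds \e{localized1} and \e{localized2}.

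\textbf{Discretization.} Using the Calder\'on--Zygmund bounds on $K$ together with $|\supp K|\le M$, a standard limiting and averaging argument reduces matters to a model sum over a collection of tiles $s=(I_s,\xi_s)$, where $I_s$ is a dyadic interval of length $|I_s|\le M$ and $\xi_s\in\ZQ$ plays the role of the ``frequency''. Here the classical statement that the frequency intervals at a fixed scale partition the frequency line is replaced by: the $d_{I_s}$-unit balls centered at the $\xi_s$ cover $\ZQ$ with bounded overlap. Condition A says that this structure is compatible across scales (a $\lambda$-$d_I$ ball is comparable to a $\lambda|I|/|J|$-$d_J$ ball), and condition B (the John-ellipsoid property) guarantees that refining $I$ to a subinterval multiplies the number of unit balls needed by only a bounded factor per halving of the scale; together these two conditions are the abstract substitute for the dimension of the space of degree-$d$ polynomials, and they are what make the tile combinatorics converge.

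\textbf{Single-tree estimate and assembly.} I would define mass/size and energy for collections of tiles by the usual formulas, with Euclidean frequency balls replaced everywhere by $d_I$-balls, and prove the single-tree estimate: for a tree $\XT$ with top $(I_{\XT},\xi_{\XT})$ the restricted portion of the model sum is $\lesssim \mathrm{size}(\XT)\,\mathrm{energy}(\XT)\,|I_{\XT}|$. This is the step where condition C does the work: within a tree all $\xi_s$ are $d_{I_{\XT}}$-close to $\xi_{\XT}$, so after pulling out the modulation by $\xi_{\XT}$ one is left with integrals of $e^{i(\xi_s-\xi_{\XT})(x)}$ against bumps, and the van der Corput--type inequality of condition C converts that oscillation into a modulus-of-continuity gain on the bump --- precisely the role played by the classical van der Corput lemma for polynomial phases in \cite{Pav}. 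Granting this, the greedy selection of maximal trees, the Bessel inequality for the selected tops (using the bounded-overlap geometry coming from A and B), and the geometric summation over dyadic size levels go through unchanged, yielding \e{global}. For \e{localized1} and \e{localized2} one inserts, as in \cite{Pav}, the enlarged sets $\tilde G=\{M\Z1_G>\nu\}$ and $\tilde F=\{M\Z1_F>\kappa\}$: a tile whose interval is not contained in $\tilde G$ has mass $\lesssim\nu$ with respect to $G$, so the relevant sizes are a priori $\lesssim\nu$ (resp. $\lesssim\kappa$), and interpolating the resulting estimate against the trivial bound produces the factor $\nu^{\alpha}$ (resp. $\kappa^{\alpha}$) for every $\alpha<1/2$.

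\textbf{Main obstacle.} The substantive part is bookkeeping rather than a new idea, since A, B and C were reverse-engineered from the argument. One must go through each lemma of \cite{Pav} that invokes a property of polynomials --- the scaling lemmas, the John-ellipsoid/covering step, and the van der Corput step --- and check that the constants that emerge depend only on $C_0$ and $C_1$, and in particular not on $M$; the $M$-uniformity asserted in the conclusion is exactly what forces one to confirm that every scale-sensitive estimate is applied only at scales $|I|\le M$, which is the content of the hypothesis $|\supp K|\le M$. The one place that requires genuine, if routine, care is verifying that condition A is a strong enough substitute for the algebraic fact that the restriction of a degree-$d$ polynomial to a subinterval is again a degree-$d$ polynomial, wherever that fact is used; I do not expect any real obstruction there.
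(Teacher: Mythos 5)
Your proposal is correct and follows essentially the same route as the paper: the paper also proves \trm{Pavel} by going through Zorin-Kranich's argument in \cite{Pav} and substituting conditions A, B and C at exactly the points you identify (A for the scale-comparison lemma, B for the covering/dimension count, C for the van der Corput step, with the localized bounds obtained via the exceptional sets $\tilde F,\tilde G$ as in \cite{Pav}). The only cosmetic difference is that the paper records this as an explicit list of local modifications to the numbered lemmas of \cite{Pav} rather than re-narrating the tile/tree scheme.
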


\trm{Pavel} is a version on Zorin-Kranich's Theorem 1.5 in  \cite{Pav}. Lemma B.1 in the latter establishes $L^p$ boundedness of $T$, for $1 < p<\infty$, from the inequalities \e{localized1} and \e{localized2}. Furthermore, Lemma B.2 establishes the local inequalities \e{localized1} and \e{localized2} for a Hardy-Littlewood maximal operator. Thus, we will allow a loss of maximal operator to be able to alternate between different kernels of the Hilbert transform.

\subsection{List of modifications in the proof}
We start the comparison with  \cite{Pav}. We want to mention here, that we refer to the equation numbering of the 5th version of Zorin-Kranich's paper on arxiv. On the left column we indicate the sections in  \cite{Pav} that we make modifications in, and in {\it italic} symbols we refer to the numbering of  \cite{Pav}.
\begin{enumerate}
\item[Sec.1.] We put $\mathbf{d} =1$ as we are interested in dimension $1$,
\item[Sec.1.] and let the kernel $K$ be translation-invariant and $1$-H\"older continuous, i.e. $\tau = 1$.
\item[Sec.2.] In  \cite{Pav}, $\ZQ$ is the vector space of polynomials up to degree $d$, for us it is a countable subset of $C^2(\ZR)$ with conditions $A,B$ and $C$.
\item[Sec.2.] In  \cite{Pav}, we have $C_2=+\infty$. Note, that if $C_2 < + \infty$, then the operator decomposes into scales smaller than $\log C_2$ and all the intervals considered in the proof will have lengths less than $C_2$.
\item[Sec.2.] The existence of a countable dense subset in a finite-dimensional vector space of polynomials w.r.t. to the supremum norm allows us to restrict the supremum in the definition of $T$ to finite number of polynomials on {\it the first paragraph of page 4}. In our case, $\ZQ$ itself is countable.
\item[Sec.2.1.] In  \cite{Pav}, the $\| \cdot \|_{I}$-norm is used only as a metric, i.e. for the difference of two elements of $\ZQ$ and never for a single element or a scalar multiple of an element. Hence, we replace the definition of the norm in {\it equation 2.5} by \e{metric}, which is the same, and assume that the latter is a metric.
\item[Sec.2.1.] Condition A replaces {\it Lemma 2.6 }. The power $d$ on the right-hand side of {\it the first inequality}, that is $1$ in our case, has no effect, as the inequality is only used to obtain {\it inequality 2.10} which is valid also under our assumption.
\item[Sec.3.2.] In {\it Definition 3.17 }, we replace $\dim \ZQ$ by $1$. This turns out to be compatible with Condition B. The idea is that instead of a $d$-dimensional vector space of polynomials we have a $1$-dimensional manifold, a curve, of functions with a John-Ellipsoid covering property.
\item[Sec.3.2.] Due to the last point, we make the same substitution in {\it inequality 3.21}. The covering argument, that comes after, is due to Condition B.
\item[Sec.4.1.] In the statement of {\it Lemma 4.1} and in all following occasions, we substitute $d = 1$. We mentioned in the beginning that $\tau =1$.
\item[Sec.4.1.] In the proof of {\it Lemma 4.1 } and in all the subsequent occasions, instead of {\it Lemma A.1 } we use Condition C.
\item[Sec.5.3.] Item \e{nestedintervalsSmaller<Larger} of Condition A is used in {\it the inequality after (5.19)}.
\item[Sec.5.3.] Then again, it appears in {\it the proof of claim (5.22)} and in {\it the block of equations after (5.24)}.
\end{enumerate}

\section{Partial Sums and M\"obius Phases}
In the first subsection of this section, we write the MT partial sum operator in a closed form and see that operator \e{maxoperator} is, up to a Hilbert transform, a maximally modulated Hilbert transform. In the second subsection, we establish two properties for the phases of M\"obius transforms that connects our problem to the general setting of the previous section.

\subsection{Partial Sum Operator}
First, we want to pass to the notation that is more conventional in stationary phase literature and in  \cite{Pav}. We write the M\"obius transform as
\begin{align*}
    \frac{e^{ix} - b}{1-\overline{b} e^{ix}} & = e^{ix}\cdot \frac{1-be^{-ix}}{\overline{ 1- be^{-ix} }}
    = e^{ix } \frac{ e^{ i \arg (1 - b e^{-ix}) } }{ e^{ - i \arg (1 - b e^{-ix}) } } \\
    & = e^{i( x + 2 \arg ( 1 - b e^{-ix} ) )}.
\end{align*}
Denoting $b = re^{i\beta}$, we apply twice the Euler formula for the expression in the argument above.
\begin{align*}
    1 - b e^{-ix} & = 1 - r e^{i(\beta - x)} \\
    & = |1 - r e^{i (\beta -x )} | \left( \frac{1 - r \cos (\beta - x)}{ |1 - r e^{i (\beta -x )}| } - i \frac{ r\sin (\beta - x)}{ |1 - r e^{i (\beta -x )}| } \right) \\
    & = |1 - r e^{i (\beta -x )} | e^{ i \arcsin \frac{ r \sin ( x - \beta ) }{ \sqrt{ 1 + r^2 - 2 r \cos( x - \beta) } }}.
\end{align*}
Here we have used that $1-r\cos(\beta - x)$ is always positive so that we can rightfully write the arcsine. Thus, putting
\begin{equation}\label{phasedefined}
\Psi_b (x) := x - \arg (b) + 2 \arcsin \frac{ | b | \sin ( x - \arg (b) ) }{ \sqrt{ 1 + |b|^2 - 2 |b| \cos( x - \arg (b) ) } },
\end{equation}
we conclude
\begin{equation}\label{MobiusPhase}
    \frac{e^{ix} - b}{1-\overline{b} e^{ix}} = e^{i \Psi_b (x) + i \arg (b)}.
\end{equation}

Let us denote by $\theta_N$ the phase of the Blaschke product $B_N$, that is $\theta _N := \sum\limits_{j=1}^N \Psi_{a_j}$. Next, we claim that the partial sum of Malmquist-Takenaka series can be written in a closed form, namely,
\begin{equation}\label{operatorclosedform}
\sum_{n=0}^{N-1} \langle f , \phi_n \rangle \phi_n (e^{ix})= \frac{1}{2\pi} \int\limits_{-\pi}^{\pi} f( e^{iy} )
 \frac{ e^{ i(\theta_N(x) - \theta_N (y) ) } - 1 }{ e^{ i(x - y) } - 1 } dy.
\end{equation}

Formula \e{operatorclosedform} appears in  \cite{Coif2} in a more general setting of invariant subspaces of $H^2$. The orthogonal complement of the invariant subspace $B_N H^2$ is spanned by $(\phi_n)_{n=0}^{N-1}$. Thus, the partial sum operator of the MT series equals $I - P$, where $I$ is the identity operator and $P$ is the orthogonal projection of $L^2$ onto $B_N H^2$. Let $\mathcal{H} : L^2 \to L^2$ be the projection operator on subspace $H^2$. Then, on $H^2$, the identity operator $I$ coincides with $\mathcal{H}$. On the other hand, $P$ has the form $\chi_{B_N} \mathcal{H} \chi_{B_N^{-1}}$, where $\chi_g$ is the pointwise multiplication operator by the function $g$.

However, for the convenience of the reader, we give a self-contained proof of this fact by induction.
\begin{proof}[Proof of \e{operatorclosedform}]
First, we check the following chain of identities, where in the first line we apply \e{MobiusPhase}.
\begin{align*}
    \frac{ e^{i ( \Psi_{b} (x) - \Psi_{b} (y) ) } - 1}{ e^{i(x-y)} - 1} & = \frac{ (e^{ix} - b)(1 - \overline{b} e^{iy} ) - ( 1 - \overline{b} e^{ix} )( e^{iy} - b ) }{ (e^{i(x - y)} - 1) (1 - \overline{b} e^{ix} ) ( e^{iy } - b )  } \\ 
    & = \frac{ e^{ix} + |b|^2 e^{iy} - e^{iy} - |b|^2 e^{iy} }{ ( e^{i(x - y)} -1)(1 - \overline{b} e^{ix} ) ( e^{iy } - b )  } \\
    & = \frac{1-|b|^2}{( 1- \overline{b} e^{ix} ) \overline{(1 - \overline{b} e^{iy}) } }. \numberthis\label{calculations}
\end{align*}
Equality \e{operatorclosedform} for $N=1$ follows by the above identity with $b = a_1$ and by the definition \e{BlaschkeMT} of the function $\phi_{n}$. Namely,
\begin{align*}
    \langle f , \phi_0 \rangle \phi_0 & = \frac{ \sqrt{ 1 - | a_1 |^2 } }{ 1 - \overline{a_1} e^{ix} } \frac{1}{2\pi} \int\limits_{-\pi}^{\pi} f( e^{iy} )
 \overline{ \left( \frac{ \sqrt{ 1 - | a_1 |^2 } }{ 1 - \overline{a_1} e^{iy} } \right) } dy \\
 & = \int\limits_{-\pi}^{\pi} f( e^{iy} ) \frac{ e^{i ( \Psi_{a_1} (x) - \Psi_{a_1} (y) ) } - 1}{ e^{i(x-y)} - 1} dy.
\end{align*}
Assuming the formula \e{operatorclosedform} for $N$, we write
\begin{align*}
    \frac{1}{2\pi} \int\limits_{-\pi}^{\pi} f( e^{iy} )
 \frac{ e^{ i(\theta_{N+1} (x) - \theta_{N+1} (y) ) } - 1 }{ e^{ i(x - y) } - 1 } dy
  = \frac{1}{2\pi} \int\limits_{-\pi}^{\pi} f( e^{iy} )
 \frac{ e^{ i(\theta_{N} (x) - \theta_{N} (y) } - 1 }{ e^{ i(x - y) } - 1 } dy \\
 + \frac{1}{2\pi} \int\limits_{-\pi}^{\pi} f( e^{iy} ) 
 \frac{ e^{ i(\theta_{N+1} (x) - \theta_{N+1} (y) ) } - e^{ i(\theta_{N} (x) - \theta_{N} (y) ) } }{ e^{ i(x - y) } - 1 } dy \\
 = \sum_{n=0}^{N-1} \langle f , \phi_n \rangle \phi_n (e^{ix})
 +\frac{1}{2\pi} e^{ i \theta_N (x)} \int\limits_{-\pi}^{\pi} f( e^{iy} ) e^{- i\theta_N(y)} 
 \frac{ e^{ i( \Psi_{a_{N+1}} (x) - \Psi_{a_{N+1}} (y) ) } - 1 }{ e^{ i(x - y) } - 1 } dy \\
 = \sum_{n=0}^{N} \langle f , \phi_n \rangle \phi_n (e^{ix}).
\end{align*}
In the penultimate equality we have used the induction hypothesis, and in the last line, we have used \e{calculations} for $b =a _{N+1}$ and the definition of $\phi_n$'s.
\end{proof}

Using \e{operatorclosedform}, the operator \e{maxoperator} can be rewritten up to a Hilbert transform on the circle as
\begin{equation}\label{maxoperator2}
Tf(e^{ix}) = \sup_n \big| \int\limits_{-\pi}^{\pi} f( e^{iy} )
e^{ - i \theta_n (y) } \frac{ dy }{ e^{ i(x - y) } - 1 } \big|.
\end{equation}

\subsection{Two Lemmas about M\"obius phases}
We identify the interval $[-\pi, \pi]$, $\ZR / 2\pi \ZZ$ and the unit circle $\ZT$ through the natural parametrizations and consider a lacunary decomposition of the circle adapted to a point $b$ inside the unit disc
\begin{equation}\label{partition}
\ZT = \bigcup\limits_{j=0}^{N-1} J^b_j,
\end{equation}
where $J^b_j = [\arg (b) + t_j, \arg (b) + t_{j+1} ] \cup [\arg (b) - t_j, \arg (b) - t_{j+1} ]$ and $t_0 = 0$, $t_j = 2^{j} (1-|b|)$ for $1\leq j < N$, $t_N = \pi$ and $N = [ \log_2 \frac{1}{1-|b|} ]$ ($[x]$ denotes the integer part of $x$).

\begin{lemma}\label{decomp}
There exist absolute constants $A, B>0$ such that for any point $b$ inside the unit disk and $0\leq j < N$ it holds
\begin{equation}\label{derratior}
\frac{A}{1-|b|} 2^{-2j} \leq \inf_{y\in J^b_j } \Psi_{b} ' (y) \leq \sup_{y \in J^b_j} \Psi_{b} ' (y)  \leq \frac{B}{1-|b|} 2^{-2j}.
\end{equation}
\end{lemma}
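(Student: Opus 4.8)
The plan is to compute $\Psi_b'$ explicitly, recognize it as the boundary value of a Poisson kernel, and then estimate that kernel block by block on the partition \e{partition}.

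After a rotation we may assume $\arg(b)=0$; since $N=[\log_2\frac{1}{1-|b|}]=0$ whenever $|b|<1/2$, the statement is vacuous unless $|b|\geq 1/2$, and we write $\epsilon=1-|b|\leq 1/2$. The first step is the identity
\[
\Psi_b'(x)=\frac{1-|b|^2}{|e^{ix}-b|^2}.
\]
This follows by logarithmic differentiation of \e{MobiusPhase}: from $e^{i(\Psi_b(x)+\arg b)}=\frac{e^{ix}-b}{1-\overline b e^{ix}}$ and the identity $1-\overline b e^{ix}=e^{ix}\,\overline{(e^{ix}-b)}$ one computes $i\Psi_b'(x)=ie^{ix}\bigl(\frac{1}{e^{ix}-b}+\frac{\overline b}{1-\overline b e^{ix}}\bigr)=\frac{i(1-|b|^2)}{|e^{ix}-b|^2}$. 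Equivalently one may differentiate \e{phasedefined} directly, using $1-u(x)^2=\frac{(1-|b|\cos x)^2}{1+|b|^2-2|b|\cos x}$ to deal with the arcsine.

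It then remains to estimate $|e^{ix}-b|^2=\epsilon^2+2|b|(1-\cos x)$ on each block. Since $1-\cos x\sim x^2$ on $[-\pi,\pi]$ and $|b|\sim 1$, we have $|e^{ix}-b|^2\sim\epsilon^2+x^2$ with absolute constants, so it suffices to show $\epsilon^2+x^2\sim 4^j\epsilon^2$ for $x\in J^b_j$, $0\leq j<N$. For $1\leq j\leq N-2$ this is immediate from $|x|\in[2^j\epsilon,2^{j+1}\epsilon]$, and for $j=0$ from $|x|\leq 2\epsilon$; for the outermost block $j=N-1$, where $|x|$ runs from $t_{N-1}=2^{N-1}\epsilon$ out to $t_N=\pi$, the choice $N=[\log_2\frac1\epsilon]$ forces $2^{N-1}\epsilon\in(1/4,1/2]$, so that $x^2\sim 1\sim 4^{N-1}\epsilon^2$ throughout and again $\epsilon^2+x^2\sim 4^{N-1}\epsilon^2$ (the case $N=1$, where $\epsilon\sim 1$, being trivial). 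Substituting, $\Psi_b'(x)=\frac{(1-|b|)(1+|b|)}{|e^{ix}-b|^2}\sim\frac{\epsilon}{4^j\epsilon^2}=\frac{2^{-2j}}{\epsilon}$ on $J^b_j$, which is exactly \e{derratior}; since $\Psi_b'(x)$ depends monotonically on $|x|$ on each block, the infimum and supremum in the statement are attained at the endpoints and obey the same two-sided bound.

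The computation is entirely elementary; the only step that is not automatic is the treatment of the last block $J^b_{N-1}$, which reaches the antipode $\arg b+\pi$ and hence is not literally a dyadic annulus — there one must use the exact choice of $N$ to see that it nonetheless behaves like one.
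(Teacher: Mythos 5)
Your proof is correct and follows essentially the same route as the paper: both compute $\Psi_b'$ explicitly as the Poisson-type kernel $\frac{1-|b|^2}{1+|b|^2-2|b|\cos(y-\arg b)}$, reduce to $\arg b=0$ by rotation, and estimate it on each lacunary block via $1-\cos x\sim x^2$, handling the outermost block through the choice $N=[\log_2\frac{1}{1-|b|}]$ (the paper treats that block separately using $t_{N-1}\geq 1/4$, while you fold it into the uniform statement $\epsilon^2+x^2\sim 4^j\epsilon^2$). No gaps; the argument is complete.
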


\begin{proof}
First of all, we compute
\begin{equation}\label{phasederiv}
\Psi_b ' (y) = \frac{ 1 - |b|^2 }{ 1 + |b|^2 - 2|b| \cos ( y - \arg(b) ) },
\end{equation}
and in particular $\Psi_b' \gtrsim 1 - |b| >0 $.

Also as $\Psi_b (x) = \Psi_{|b|} (x - \arg b)$, we see that the inequality \e{derratior} is rotationally invariant. We can assume that $b = |b| =: r$.

Next, $t_{N-1} \geq ( 1-r) \cdot \frac{1}{4(1-r)} =1/4 $. Thus, for any $y \in I_{N-1}$ we have
\begin{equation*}
    \frac{1-r}{1+r} \leq \Psi_b ' (y) \leq \frac{1-r^2}{1+r^2-2r \cos{1/4}} \lesssim 1-r.
\end{equation*}
Therefore, the conclusion of the lemma is true for $j= N-1$. One can also check the conclusion of the lemma for $j=0$ and we restrict our attention to intervals $I_j := [t_j, t_{j+1}]$ with $1\leq j < N-1$.

Let $c_1 > c_2>0 $ be constants such that $ 1 - c_1 x^2 < \cos (x) < 1 - c_2 x^2$ for $x \in [- \pi/2, \pi/2]$. Then,
\begin{align*}
\sup_{y \in I_j} \Psi_{r} ' (y) &= \sup_{y\in I_j} \frac{1 - r^2 }{1 + r^2 - 2 r \cos ( y ) } \\
& \leq \frac{ 1 - r }{1 + r^2 - 2r ( 1 - c_2 2^{2j} (1-r)^2 ) } \\ 
& = \frac{1}{(1-r) (1 + 2r c_2 2^{2j} ) } \lesssim \frac{2^{-2j}}{1-r}.
\end{align*}
In last inequality we have assumed $r > 1/2$ because otherwise there is nothing to prove. The other inequality is analogous
\begin{align*}
\inf_{y \in I_j} \Psi_{r} ' (y) &= \inf_{y\in I_j} \frac{1 - r^2 }{1 + r^2 - 2 r \cos ( y ) } \\
& \geq \frac{ 1 - r }{1 + r^2 - 2r ( 1 - c_1 2^{2(j+1)} (1-r)^2 ) } \\ 
& = \frac{1}{(1-r) (1 + 2r c_1 2^{2(j+1)} ) } \gtrsim
\frac{2^{-2j}}{1-r}.
\end{align*}
\end{proof}

The next lemma is a Van der Corput-type estimate.
\begin{lemma}\label{oscillatory}
Let $r> 1/2$ and $I$ be an interval of length at most $(1-r)$. Let $b_1, \dots, b_k$ be points in the unit disk with the property that
\begin{equation}\label{distancecondition}
if\; \arg (b_j) \in I , \text{ then } 1- | b_j | \geq c(1 - r),
\end{equation}
for some parameter $c>0$. Then for any $g \in C^1(I)$
\begin{equation}\label{oscestimate}
| \int_I e^{ i \sum\limits_{j=1}^k \Psi_{b_j} (y)} g(y) dy | \lesssim_c \frac{1}{ \sum\limits_{j=1}^k \inf_{y \in I } \Psi_{b_j} '(y)  } ( \| g' \|_{L^1(I)} + \frac{1}{|I|} \int_I | g | ).
\end{equation}
\end{lemma}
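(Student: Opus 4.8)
\emph{Plan.} Write $\Phi:=\sum_{j=1}^k\Psi_{b_j}$, so the integral in \e{oscestimate} is $\int_I e^{i\Phi}g$. The plan is a van der Corput / non‑stationary‑phase argument: integrate by parts against $1/\Phi'$ and control the boundary and error terms. By \e{phasederiv},
\[
\Phi'(y)=\sum_{j=1}^k\Psi_{b_j}'(y)=\sum_{j=1}^k\frac{1-|b_j|^2}{1+|b_j|^2-2|b_j|\cos(y-\arg b_j)}>0,
\]
so on $I$ we have the uniform lower bound $\Phi'(y)\ge\lambda:=\sum_j\inf_I\Psi_{b_j}'$. Integrating by parts, $\int_I e^{i\Phi}g=\bigl[\tfrac{e^{i\Phi}g}{i\Phi'}\bigr]_{\partial I}-\int_I\tfrac{e^{i\Phi}}{i}\bigl(\tfrac{g}{\Phi'}\bigr)'$, and estimating the boundary term by $\tfrac{2}{\lambda}\|g\|_{L^\infty(I)}$, the error term by $\|g\|_{L^\infty(I)}\,\mathrm{Var}_I(1/\Phi')+\tfrac{1}{\lambda}\|g'\|_{L^1(I)}$, and finally $\|g\|_{L^\infty(I)}\le\tfrac{1}{|I|}\int_I|g|+\|g'\|_{L^1(I)}$, one arrives at
\[
\Bigl|\int_I e^{i\Phi}g\Bigr|\lesssim\Bigl(\tfrac{1}{\lambda}+\mathrm{Var}_I\bigl(\tfrac{1}{\Phi'}\bigr)\Bigr)\Bigl(\tfrac{1}{|I|}\int_I|g|+\|g'\|_{L^1(I)}\Bigr).
\]
Thus everything reduces to the estimate $\mathrm{Var}_I(1/\Phi')\lesssim_c 1/\lambda$, which is the heart of the matter.

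To prove this, I would split $I$ into its two halves $I=I_-\cup I_+$ (total variation being additive over this partition) and argue on $I_-$; the case of $I_+$ is the mirror image. The sign of $\Psi_{b_j}''$ equals $-\mathrm{sgn}\,\sin(y-\arg b_j)$, so the behaviour of $\Psi_{b_j}'$ on $I_-$ is governed by where $\arg b_j$ sits on $\mathbb{T}=I\cup(\text{left arc})\cup(\text{antipodal arc})\cup(\text{right arc})$, the antipodal arc being the arc of length $|I|$ about the antipode of the centre of $I$. For $b_j$ in the left arc, $\Psi_{b_j}'$ is monotone \emph{decreasing} on $I_-$; set $d:=\sum_{b_j\in\text{left arc}}\Psi_{b_j}'$, a nonnegative monotone function. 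For every other $b_j$ (i.e.\ $\arg b_j\in I$, or in the right arc, or in the antipodal arc) an elementary inspection of \e{phasederiv} shows that $\Psi_{b_j}'|_{I_-}$ is monotone or unimodal and \emph{multiplicatively slowly varying}, $\sup_{I_-}\Psi_{b_j}'\lesssim_c\inf_{I_-}\Psi_{b_j}'$: for $\arg b_j$ in the right or antipodal arc this holds with an absolute constant because $\arg b_j$ is at comparable distances from all points of $I_-$; for $\arg b_j\in I$ this is exactly where hypothesis \e{distancecondition} is used, since it gives $1-|b_j|\ge c(1-r)\ge c|I|$, hence $|y-\arg b_j|\le|I|\le c^{-1}(1-|b_j|)$ on $I_-$ and the denominator of $\Psi_{b_j}'$ stays within a factor $1+c^{-2}$ of $(1-|b_j|)^2$. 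Consequently $a:=\sum_{b_j\notin\text{left arc}}\Psi_{b_j}'$ satisfies $\mathrm{Var}_{I_-}(a)\le\sum\mathrm{Var}_{I_-}(\Psi_{b_j}')\lesssim_c\sum\inf_{I_-}\Psi_{b_j}'\lesssim_c\inf_{I_-}(a)$, and $\Phi'|_{I_-}=a+d$ with $a$ slowly varying and $d$ monotone.

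It then remains a short calculus exercise. Put $\alpha:=\inf_{I_-}a$ and $d_1:=\inf_{I_-}d$, and bound $\mathrm{Var}_{I_-}(1/(a+d))=\int_{I_-}\tfrac{|a'+d'|}{(a+d)^2}$ by the sum of two integrals. The $a'$-integral is at most $\min\bigl(\int\tfrac{|a'|}{a^2},\int\tfrac{|a'|}{d^2}\bigr)\lesssim_c\min(\alpha^{-1},\alpha d_1^{-2})\le(\max(\alpha,d_1))^{-1}$, using $\mathrm{Var}_{I_-}(a)\lesssim_c\alpha$ and $d\ge d_1$. For the $d'$-integral, the substitution $u=d(y)$ (legitimate since $d$ is monotone on $I_-$) together with $a\ge\alpha$ gives $\int_{I_-}\tfrac{|d'|}{(a+d)^2}\le\int\tfrac{du}{(\alpha+u)^2}\le(\alpha+d_1)^{-1}\le(\max(\alpha,d_1))^{-1}$. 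Finally, evaluating $\Phi'\ge\lambda$ at the common endpoint of $I_-$ and $I_+$ and using $a\lesssim_c\alpha$ there shows $\lambda\lesssim_c\alpha+d_1\le 2\max(\alpha,d_1)$, so both contributions are $\lesssim_c 1/\lambda$. Summing over $I_-,I_+$ gives $\mathrm{Var}_I(1/\Phi')\lesssim_c 1/\lambda$, and \e{oscestimate} follows.

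The main obstacle is precisely the estimate $\mathrm{Var}_I(1/\Phi')\lesssim_c 1/\lambda$. The subtlety is that $\Phi'$ can genuinely vary by an \emph{unbounded multiplicative} factor across $I$ (for instance when a zero $b_j$ lies just outside $I$ and extremely close to $\partial\mathbb{D}$), so a naive bound on $\mathrm{Var}_I(\log\Phi')$ is false and term‑by‑term estimates lose a factor of $k$; one must instead exploit the structural decomposition $\Phi'=a+d$ (slowly varying plus monotone) on each half of $I$, which is where \e{distancecondition} — controlling only the zeros whose argument falls inside $I$ — is used, the zeros with $\arg b_j\notin I$ being handled for free by the monotonicity of $\Psi_{b_j}'$ there. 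The monotonicity/slow‑variation checks for the four location classes (including the bookkeeping for the antipodal arc), as well as the final calculus lemma, are routine computations of the same flavour as \lem{decomp}.
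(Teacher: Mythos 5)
Your proof is correct, and it rests on the same non-stationary-phase mechanism as the paper's: integrate by parts against $1/\Phi'$, control the resulting $\int_I |\Phi''|/(\Phi')^2$-type term by classifying the zeros according to where $\arg b_j$ sits relative to $I$, exploit the constant sign of $\Psi_{b_j}''$ (hence monotonicity of $\Psi_{b_j}'$) for zeros whose argument lies far from $I$, and invoke hypothesis \e{distancecondition} only for zeros with $\arg b_j\in I$. The bookkeeping, however, is genuinely different. The paper keeps $I$ whole and splits the zeros into four groups (arguments in $I$, in the antipodal arc, and in the two remaining arcs); the two large groups are handled by exactly the mechanism of your monotone part $d$ (group-wise sign constancy, so the integral evaluates by a boundary difference), while the in-$I$ zeros are further decomposed dyadically according to $1-|b_j|\sim 2^m(1-r)$ and summed geometrically in $m$. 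You instead isolate the clean estimate $\mathrm{Var}_I(1/\Phi')\lesssim_c 1/\lambda$, halve $I$ so that on each half only the zeros on one side can be simultaneously close to the half and rapidly varying (these form your monotone $d$, the rest being multiplicatively slowly varying), and dispose of the in-$I$ zeros in one stroke via the per-term bound $\mathrm{Var}(\Psi_{b_j}')\lesssim_c\inf\Psi_{b_j}'$, whose sum is dominated by $\inf a$ with no dyadic decomposition; your calculus lemma for $a+d$ then replaces the paper's group-wise exact integrations. Both routes are sound: yours buys a tidier formulation of the key estimate and avoids the geometric summation over $m$, while the paper's four-arc split avoids halving $I$. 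One small point to keep explicit: the comparison $\lambda\lesssim_c\alpha+d_1$ must be made at the endpoint of the half-interval where $d$ attains its infimum (which is the common endpoint precisely because $d$ decreases there, as you observe), and the degenerate cases $a\equiv 0$ or $d\equiv 0$ are covered by whichever of your two bounds remains.
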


\begin{proof}
To begin, assume that $g\equiv 1$ and compute
\begin{equation}\label{secondder}
\Psi '' _b (y) = -2|b| \frac{ ( 1 - |b|^2 ) \sin ( y - \arg(b) ) }{ ( 1 + |b|^2 - 2|b| \cos( y - \arg(b) ) )^2 }.
\end{equation}
Let $\phi(y) := \sum\limits_{j=1}^k \Psi_{b_j} (y)$. We apply the usual integration by parts.
\begin{align*}
| \int_I e^{i \phi(y)} dy | & \leq | \int_{I} e^{i \phi(y) } \frac{ \phi '' (y) }{ ( \phi ' (y) )^2 } dy| + | ( \phi ' )^{-1} e^{i\phi} \Big\vert_I |.
\end{align*}
The second summand is bounded by the right-hand side of \e{oscestimate}. By translation symmetry (rotation of the circle) let $I = [0, x_0) $ with $x_0 < 1 - r$. We split the points into four groups.
\begin{align*}
A_1 & := \{ j \: : \: \arg(b_j) \in [x_0, \pi )  \}, \\
A_2 & := \{ j \: : \: \arg(b_j) \in [-\pi+x_0, 0) \}, \\
A_3 & := \{ j \: : \: \arg(b_j) \in [0, x_0 )  \}, \\
A_4 & := \{ j \: : \: \arg(b_j) \in [-\pi, -\pi + x_0 )  \}.
\end{align*}

Estimates for $A_1$ are $A_2$ identical. So we consider only $A_1$. By \e{secondder} $ \Psi_{b_j} ''(y)$ maintains the sign on $I$ for $j \in A_1$. Hence,
\begin{align*}
| \int_I e^{i \phi(y)} \frac{ \sum\limits_{j \in A_1 } \Psi _{b_j} ''(y) }{ ( \sum\limits_{j=1}^k \Psi_{b_j}'(y) )^2 } dy| &
 \leq \int_I \frac{ \sum\limits_{j \in A_1 } \Psi _{b_j} ''(y) }{ ( \sum\limits_{j \notin A_1} \inf_{y\in I} \Psi_{b_j}' + \sum\limits_{j\in A_1} \Psi_{b_j}'(y) )^2 } dy \\
 & = \frac{1}{\sum\limits_{j \notin A_1} \inf_{y\in I} \Psi_{b_j}' + \sum\limits_{j\in A_1} \Psi_{b_j}'(\cdot)} \Big\vert_I \\
 & \leq \frac{1}{\sum\limits_{j =1}^k \inf_{y\in I} \Psi_{b_j}' }.
\end{align*}

We turn to $A_4$. Let $b$ be a point inside the disc such that $\arg (b) \in [-\pi, -\pi + x_0)$ and $y \in I$. Then, recalling that $|I|=x_0 \leq 1-r$ and the formula \e{phasederiv}, we have
\begin{align*}
    | \Psi_{b} ''(y) | &\leq \frac{1 - |b|^2}{( 1 + |b|^2 - 2|b| \cos( y - \arg(b) ) )^2} \\
    & \leq \Psi_b '(y) \frac{1}{1 + |b|^2 -2|b| \cos (\pi - 2(1-r) )} \leq \Psi_b'(y).
\end{align*}
Thus, we conclude
\begin{align*}
   | \int_I e^{i \phi(y)} \frac{ \sum\limits_{j \in A_4 } \Psi _{b_j} ''(y) }{ ( \sum\limits_{j=1}^k \Psi_{b_j}'(y) )^2 } dy| &
 \leq \int_I \frac{ \sum\limits_{j \in A_4 } |\Psi _{b_j} ''(y) | }{ ( \sum\limits_{j \notin A_4} \inf_{y\in I} \Psi_{b_j}' + \sum\limits_{j\in A_4} \Psi_{b_j}'(y) )^2 } dy \\
 & \leq \frac{1-r}{\sum_j \inf_{y\in I} \Psi_{b_j}'}.
\end{align*}

Finally, we treat the sum with $A_3$. We need to further decompose $A_3$. For $m\geq \log (c)$ let
\begin{equation*}
B_m := \{ j \in A_3 \: : \: 2^m (1-r) \leq 1 - |b_j| < 2^{m+1} (1-r) \}.
\end{equation*}
Then, for $y \in I$ and $j\in B_m$ we have
\begin{align*}
    1 + |b_j|^2 - 2 |b_j| \cos (y - \arg (b_j) ) & \leq 1 + |b_j|^2 - 2 |b_j| \cos (1-r) \\
    & \leq 1 + |b_j|^2 - 2 |b_j| ( 1 - c_1 (1-r)^2 ) \\
    & \sim (1 - b_j )^2.
\end{align*}
On the other hand,
\begin{align*}
    1 + |b_j|^2 - 2 |b_j| \cos (y - \arg (b_j) ) \geq 1 + |b_j|^2 - 2 |b_j| = (1-b_j)^2.
\end{align*}
So we conclude, that
\begin{equation*}
    1 + |b_j|^2 - 2 |b_j| \cos (y - \arg (b_j) ) \sim (1-b_j)^2.
\end{equation*}
Using this equivalence, we bound the the first and second derivatives of $\Psi_{b_j}$'s.
\begin{equation*}
\Psi '_{b_j} (y) \gtrsim \frac{ 1 - |b_j| }{ (1 - b_j)^2 } = \frac{ 1 }{(1-r)2^{m}}.
\end{equation*}
For the second derivative we write
\begin{equation*}
|\Psi '' _{b_j} (y)| \lesssim \frac{ (1-r) (1-|b_j|) }{ (1 - |b_j| )^4 } \lesssim \frac{1}{(1-r)^2 2^{3m}}.
\end{equation*}
We are ready to estimate the integral for $B_m$. Putting $C = \sum\limits_{j \notin B_m} \inf_{y\in I} \Psi_{b_j}'$ we write
\begin{align*}
| \int_I e^{i \phi(y)} \frac{ \sum\limits_{j \in B_m } \Psi _{b_j} ''(y) }{ ( \sum\limits_{j=1}^k \Psi_{b_j}'(y) )^2 } dy| &
\leq \int_I \frac{ \sum\limits_{j \in B_m } |\Psi _{b_j} ''(y)| }{ (C + \sum\limits_{j\in B_m} \Psi_{b_j}'(y) )^2 } dy \\
& \lesssim \int_I \frac{ \#(B_m)  \frac{1}{(1-r)^2 2^{3m}} }{ ( C + \# ( B_m ) \frac{1}{(1-r) 2^{m}} ) (C + \sum\limits_{j\in B_m} \inf \Psi '_{b_j} ) } \\
& \lesssim \frac{1}{2^{2m}} \cdot \frac{1}{C + \sum\limits_{j\in B_m} \inf \Psi '_{b_j}} = \frac{1}{2^{2m}} \cdot \frac{1}{ \sum\limits_{j = 1}^k \inf \Psi '_{b_j} },
\end{align*}
where in the last inequality we have used $C>0$ and $|I| \leq 1-r$.
Summing over $m \geq \log (c)$ finishes the case $g \equiv 1$.

The general case follows by a standard integration by parts argument. Denote $F(x) := \int_0^x e^{i \sum_{j=1}^k \Psi_{b_j} (y) } dy$ for $0\leq x\leq x_0$. We have proved
$|F(x)| \lesssim \frac{1}{\sum_{j=1}^k \inf_I \Psi_{b_j}'}$. Thus, we conclude
\begin{align*}
    | \int_0^{x_0} g(y) e^{i\sum_{j=1}^k \Psi_{b_j} (y)} dy | &= | \int_0^{x_0} g(y) F'(y) dy | \\
    &\leq | g(0)F(0) - g(x_0)F(x_0) | + | \int_0^{x_0} F(y) g'(y) dy| \\
    & \lesssim \frac{1}{\sum_{j=1}^k \inf_I \Psi_{b_j}'} (\int_0^{x_0} |g'(y)|dy + g(x_0) ).
\end{align*}
\end{proof}

\section{Upper bound for \trm{compactcase}}
Assume $0 < r < 1$ is close enough to $1$ and $| a_n | \leq r$. Let $N : \ZT \to \mathbb{N}$ be a choice function and
\begin{equation}\label{maxoperator10}
T f( x ) := \int\limits_{-\pi}^{\pi} f( y )
e^{ - i \theta_{N(x)} (y) } \frac{ dy }{ \sin \frac{x-y}{2} }
\end{equation}
be the linearzied version of the operator \e{maxoperator2} up to a Hardy-Littlewood maximal operator. Denoting $R:= \log \frac{1}{1-r}$, we need to prove for $1 < p \leq 2$
\begin{equation}\label{nonsharp}
    \| T f \|_{L^p( \ZT ) } \lesssim_p R \| f \|_{L^p( \ZT ) },
\end{equation}
and for $2 \leq p < \infty$
\begin{equation}\label{sharpp>2range}
    \| T f \|_{L^p( \ZT ) } \lesssim_p R^{\frac{1}{2} } \| f \|_{L^p( \ZT ) }.
\end{equation}

Let us decompose the kernel $\frac{1}{ \sin \frac{x-y}{2} }$ into scales. Take a bump function $\xi_0 \in C^{\infty} (\ZR)$ supported in $[-\frac{1}{8}, -\frac{1}{2} ] \cup [ \frac{1}{8} , \frac{1}{2} ]$, such that $\sum_{s = 0}^{\infty } \xi _0 (2^s \cdot ) \equiv 1$ on $[-\frac{1}{4}, \frac{1}{4}]$.
For any $x,y \in \ZR$, we have
\begin{align*}
\frac{1}{\sin \frac{x-y}{2}} & = \frac{1}{\sin \frac{x-y}{2}} \sum_{s = R - 5}^{\infty } \xi_{s} (x-y) + \frac{1}{\sin \frac{x-y}{2}} \Big(1 - \sum_{s = R - 5 }^{\infty } \xi_s (x-y) \Big) \\
& =: K_r(x-y) + \frac{1}{\sin \frac{x-y}{2}} \Big(1 - \sum_{s = R - 5 }^{\infty } \xi_s (x-y) \Big),
\end{align*}
where $\xi_s (\cdot ) = \xi _0 (2^s \cdot )$. $K_r$ is a Cald\'eron-Zygmund kernel on the real line supported on $[-c(1-r); c(1-r)]$ for some absolute constant $c$, and with the corresponding $C_1$ quantity from \sect{generalsetting} bounded by an absolute constant. We denote
\begin{align}
    & T_{-1} f ( e^{ix} ) := \int\limits_{-\pi}^{\pi} f( e^{iy} ) e^{ - i \sum_{j=1}^n \Psi_{a_j} (y) }  \Big(1 - \sum_{s = 0 }^{\infty } \xi_s (x-y) \Big) \frac{dy}{\sin \frac{x-y}{2}} , \\
    & T_{s} f ( e^{ix} ) := \int\limits_{-\pi}^{\pi} f( e^{iy} ) e^{ - i \sum_{j=1}^n \Psi_{a_j} (y) }  \xi_s (x-y) \frac{dy}{\sin \frac{x-y}{2}} , \\
    & T_{\text{Small}} f( e^{ix} ) := \sum\limits_{s = R-5}^{\infty} T_{s} f( e^{ix} ) = \int\limits_{-\pi}^{\pi} f( e^{iy} ) e^{ - i \sum_{j=1}^n \Psi_{a_j} (y) }K_r(x-y) dy, \\
    & T_{\text{Large}} f ( e^{ix} ) := \sum\limits_{s = -1}^{R+4} T_{s} f( e^{ix} ).
\end{align}
Then, we have $T = T_{\text{Small}} + T_{\text{Large}}$. For one scale, we have $| T_s f ( x ) | \lesssim Mf ( x )$, where $Mf$ is the Hardy-Littlewood maximal function. Thus, by a triangle inequality, we write
\begin{equation}\label{largescalestriangle}
    \| T_{\text{Large}} f \|_{L^p (\ZT ) } \lesssim_p R \| f\|_{L^p( \ZT )}.
\end{equation}
We prove in \subsect{smallscales} that
\begin{equation}\label{smallscalesestimate}
    \| T_{\text{Small}} f \|_{L^p( \ZT )} \lesssim_p \| f\|_{L^p( \ZT )}.
\end{equation}
Then, the inequality \e{nonsharp} for all $p$ follows immediately from \e{largescalestriangle} and \e{smallscalesestimate}. The improved estimate \e{sharpp>2range} will follow by a more subtle argument for the large scales. This is done in \subsect{improvedlocalestimate} and \subsect{allscalespgeq2}.

\subsection{Small Scales}\label{smallscales}
We make a transition into the real line to be able to apply \trm{Pavel}. First, we extend the phases outside $[-\pi, \pi]$.
\begin{equation*}
\psi_{a_n} (x) := \left\{ \begin{aligned}
& \Psi_{a_n}(x), \text{ if } x \in [-\pi, \pi) \\
& \Psi_{a_n}(\pi) + \Psi_{a_n} ' (\pi ) ( x-\pi), \text{ if } x \geq \pi \\
& \Psi_{a_n}(-\pi) + \Psi_{a_n} ' (-\pi) (x+\pi), \text{ if } x < -\pi .
\end{aligned}\right.
\end{equation*}
Then, recalling the remark regarding $K_r$ right after its definition we define
\begin{equation}
    \tilde{T}_{\text{Small}} f(x) := \int\limits_{\ZR} f( y ) e^{ - i \sum_{j=1}^n \psi_{a_j} (y) } K_r(x-y) \chi_{ [-\pi , \pi] } (x-y) dy.
\end{equation}
Let $f: \ZT \to \mathbb{C}$ is supported on a half circle and $\tilde{f} : \ZR \to \mathbb{C}$ such that $\tilde{f} (x) = f(e^{ix})$ for $x\in [-\pi, \pi]$ and $\tilde{f} (x) = 0$ elsewhere. Then, $\tilde{T}_{\text{Small}} \tilde{f} (x) =T_{\text{Small}} f ( e^{ix} )$ for $x\in [-\pi, \pi]$. \e{smallscalesestimate} will follow from the $L^p(\ZR )$ bound of $\tilde{T}_{\text{Small}}$ and a triangle inequality.

Define
\begin{equation*}
\ZQ := \{ \sum_{j=1}^n \psi_{a_j} \: : \: n \in \mathbb{N} \} \text{ and } C_2 := c(1-r)/2.    
\end{equation*}
Then, \trm{Pavel} applies to operator $\tilde{T} _{\text{Small}}$ as soon as we verify conditions A, B and C which we do next.

By \lem{decomp} and as $|a_n| \leq r$, we have, for any interval $I$ of length at most $c (1-r)$,
\begin{equation}\label{derequiv}
\sup_I \psi_{a_j} '\sim \inf_I \psi_{a_j} '.
\end{equation}
This equivalence is central for the arguments below.

\begin{enumerate}
\item[A.] Let $P = \sum_{j=1}^n \psi_{a_j}$ and $Q = \sum_{j=1}^m \psi_{a_j}$ with $n<m$ and $J \subset I$ with $|I| \leq c(1-r)$.  Then,
\begin{align*}
d_I (P, Q) &\leq |I| \sum_{j=n+1}^m \sup_I \psi_{a_j}' \lesssim \frac{|I|}{|J|} |J| \sum_{j=n+1}^m \inf \psi_{a_j}' \\
& \leq \frac{|I|}{|J|} \sup_{x,y\in J} |(P-Q)(x) - (P-Q)(y)| = \frac{|I|}{|J|} d_J (P,Q).
\end{align*}
The reverse inequality follows by the same argument.
\item[B.] Let $P$ and $I$ be as above. We look at
\begin{equation*}
    B_I (P, \lambda) := \{ Q \in \ZQ \: : \: d_I(P, Q) < \lambda \}.
\end{equation*}
By \e{derequiv} and the definition \e{metric} of the metric $d_I$, there are absolute constants $D_1, D_2 >0$ such that
\begin{align*}
    B_I (P, \lambda) & \subset \{ Q = \sum_{j=1}^m \psi_{a_j} \: : \: \sum_{j=n+1}^m \sup_I \psi_{a_j}' \leq D_1\lambda / |I| \}, \\
     B_I (P, 1) & \supset \{ Q = \sum_{j=1}^m \psi_{a_j} \: : \: \sum_{j=n+1}^m \sup_I \psi_{a_j}' \leq D_2 / |I| \}.
\end{align*}

Let $N >n$ be the largest index for which $\sum_{j=n}^{N+1} \sup_I \psi_{a_j}' \leq D_1 \lambda /|I|$.
Similarly, define $\tilde{N} < n$ to be the smallest index satisfying the above inequality. Then,
\begin{equation*}
     B_I (P, \lambda) \subset \{ Q = \sum_{j=1}^m \psi_{a_j} \: : \: \tilde{N} \leq m \leq N \}.
\end{equation*}
Now choose indices $n = n_1<n_2< \cdots < n_C =N $ consecutively so that
\begin{equation}\label{choiceofballs}
    \sum_{j=n_k}^{n_{k+1}-1} \sup_I \psi_{a_j}' \leq D_2 / |I| < \sum_{j=n_k}^{n_{k+1}} \sup_I \psi_{a_j}'.
\end{equation}
The set inclusions above imply that each set $\{ \sum_{j=1}^m \psi_{a_j} \: : \: n_k \leq m < n_{k+1} \}$ is in a $(1, d_I )$-ball. Namely,
\begin{equation*}
    \{ \sum_{j=1}^m \psi_{a_j} \: : \: n_k \leq m < n_{k+1} \} \subset B_I \big( \sum_{j=1}^{n_k} \psi_{a_j} , 1 \big).
\end{equation*}
Furthermore, summing up the left-hand sides of \e{choiceofballs} for $k = 1, \dots , C$ we conclude
\begin{equation*}
    C D_2/ |I| < 2 \sup_{j=n}^N \psi '_{a_j} \leq 2 D_1 \lambda/|I|.
\end{equation*}
Similarly, the same argument holds "from the left of $n$" for $\tilde{N}$. Hence, the number $C$ of $(1, d_I)$-balls, that are required to cover the $(\lambda, d_I )$-ball, is at most $\frac{4D_1}{D_2} \lambda$.

\item[C.] This property follows from \lem{oscillatory}. We reproduce the argument of Lemma A.1 of  \cite{Pav}.

Let $g$, $J, P, Q$ be as in the hypothesis of Condition C, and $\Delta := d_J(P, Q) + 1$. If $\Delta \leq 3/2$, then as $\supp g \subset J$ we trivially have
\begin{align*}
| \int_J e^{i(P-Q)(x)} g(x) dx| & \leq \int_J |g(x)| dx \leq \sup\limits_{ |y| < 2|J|/3 } \int_{ \ZR } | g(x) - g(x-y) | dx.
\end{align*}
Otherwise, assume $\Delta \geq 3/2$ and denote $\beta := \sup\limits_{ |y| < \Delta ^{-1} |J| } \int_{ \ZR } | g(x) - g(x-y) | dx.$
Let $\chi_0$ be a smooth bump function supported in $(-1,1)$ with integral $1$, and $\chi := \Delta |J|^{-1} \chi_0 (\Delta |J|^{-1} \cdot)$ is its dilate. We want to change $g$ by $\tilde{g } := \chi * \psi$. For the error term we have
\begin{align*}
\int | g(x) - \tilde{g} (x) | dx &= \int | \int ( g( x ) - g( x - y ) ) \chi(y) dy | dx \\
& \leq \int \chi(y) \int | g(x) - g(x-y) | dx dy \lesssim \beta.
\end{align*}
For the derivative we estimate
\begin{align*}
\int | \tilde{g} \: ' (x) | dx &= \int | \int g (x-y) \chi \: '(y) dy | dx \\
& = \int | \int ( g (x) - g (x-y) ) \chi \: '(y) dy | dx \\
& \leq \int \int | g (x) - g (x-y) | |\chi \:'(y)| dy dx \\
& \lesssim \Delta ^2 |J|^{ -2} \int \int\limits_{ -\Delta^{-1} |J| } ^{ \Delta^{-1} |J| } | g (x) - g (x-y) | dy dx \lesssim \Delta |J|^{-1} \beta.
\end{align*}
To conclude, we observe that $\tilde{g}$ is supported on $2J$ so we can apply \lem{oscillatory} on $2J\cap [-\pi, \pi]$. On $2J \setminus [-\pi,\pi]$ the same bound holds trivially as the phases $\psi$ are linear there.
\begin{align*}
\big| \int_{ \ZR } g (x) e^{ i(P - Q)(x) } dx \big| & \lesssim \beta + \big| \int_{ 2J } \tilde{g} (x) e^{ i(P - Q)(x) } dx \big| \\
& \lesssim \beta + \Big( \sum\limits_{j=n}^m \inf_{2J} \psi_{a_j} \Big)^{-1} \Delta |J|^{-1} \beta \lesssim \beta,
\end{align*}
where in the last line we use \e{derequiv} and $\Delta \gtrsim 1$.
\end{enumerate}
The verification of these conditions implies that $C_0$ can be chosen to be an absolute constant. This finishes the estimate \e{smallscalesestimate}.

\subsection{Improved Local Estimate}\label{improvedlocalestimate}
Let $I_k := [k(1-r), (k+1)(1-r)]$ for $0\leq k \leq \frac{2\pi }{1-r}$ be a partition of the unit circle. Fix some $k$ and denote $I = I_k$. We prove in this subsection that 
\begin{equation}\label{localizedestimate}
    \| T^* \Z1_I g \|_{L^2([-\pi,\pi])} \lesssim R^{\frac{1}{2}} \| g \|_{L^2([-\pi,\pi])},
\end{equation}
where $T^*$ is the adjoint operator of \e{maxoperator10}, i.e.
\begin{equation}
    T^*g(y) = \sum\limits_{s = -1}^{\infty } T^*_s g(y) = T^*_{-1}g(y) + \sum\limits_{s = 0 }^{\infty} \int\limits_{-\pi}^{\pi} g(x) e^{i \theta_{N(x)} (y) } \frac{ \xi_s(x-y)}{\sin \frac{x-y}{2}} dx.
\end{equation}
We know that $\supp \xi_s \subset [-2^{-(s+1)}, - 2^{-(s+3)} ] \cup [2^{-(s+3)}, 2^{-(s+1)} ]$. Let $c(I)$ denote the midpoint of interval $I$. Recall that $|I| = 1-r$, so for $s \leq R-5$ we have
$$\supp T^*_s \Z1_I g \subset B\big( c(I), 2^{-s} \big) \setminus  B\big( c(I), 2^{-(s+4)} \big). $$
Then, the key observation is that  for $| s - s'| \geq 4$ and $s, s' \leq R-5$
\begin{equation}
    T^*_s \Z1_I g(y) T^*_{s'} \Z1_I g(y) = 0.
\end{equation}
Using the estimates for small scales from the previous subsection and H\"older inequality we write
\begin{align*}
    \int_{-\pi}^{\pi} | T^* \Z1_I g (y) |^{2} dy \lesssim \sum\limits_{j = 0}^2
    \sum\limits_{s = 0}^{R-5} \int_{-\pi}^{\pi} | T^*_{\text{Small}} \Z1_I g (y) |^{j} |T_s \Z1_I g(y) |^{2 - j} dy \\
    \lesssim \sum\limits_{j=0}^2 \sum\limits_{s = 0}^{R-5} \int_{-\pi}^{\pi} | T^*_{\text{Small}} \Z1_I g (y) |^{j} |T_s \Z1_I g(y) |^{2 - j} dy \\
    \lesssim \sum\limits_{j=0}^2 \sum\limits_{s = 0}^{R-5} \| T^*_{\text{Small} } \Z1_I g \|_{L^{2} ( [-\pi, \pi] ) }^{j} \| M \Z1_I g \|_{L^{2} ( [-\pi, \pi] ) }^{2 - j} \\
    \lesssim R \| \Z1_I g \|_{L^{2} ( [-\pi, \pi] ) }^{2}.
\end{align*}

\subsection{All scales: $p \geq 2$}\label{allscalespgeq2}
First, let $p=2$. For $g\in L^2([-\pi, \pi])$ we want to prove
\begin{equation}\label{allscalespequals2}
    \int_{-\pi}^{\pi} |T^* g(y)|^2dy \lesssim R \int_{-\pi}^{\pi} |g(y)|^2.
\end{equation}
It suffices to consider $\supp g \subset \cup_{k \text{ even }} I_k$ as the odd case is analogous and the general case follows by a triangle inequality. Also we assume $g$ is real valued as the general case follows by yet another triangle inequality.

We decompose the left-hand side of \e{allscalespequals2} to diagonal and off-diagonal terms as follows.
\begin{align*}
    \int_{-\pi}^{\pi} |T^* g(y)|^2 =& \sum\limits_{k,k' \text{ even}} \int_{-\pi}^{\pi} T^* \Z1_{I_k}g(y) \overline{ T^* \Z1_{I_{k'}}g(y)} \\
    =& \sum\limits_{k\neq k' \text{, even}} \int_{-\pi}^{\pi} T^* \Z1_{I_k}g(y) \overline{ T^* \Z1_{I_k'}g(y)} + \sum\limits_{k\text{ even}} \int_{-\pi}^{\pi} |T^* \Z1_{I_k}g(y)|^2 \\
    =&: \Sigma_{\text{off-diagonal}} + \Sigma_{\text{diagonal}}.
\end{align*}
For the diagonal term we plug in the improved local estimate \e{localizedestimate}.
\begin{align*}
    \Sigma_{\text{diagonal}} & \lesssim R \sum\limits_k \int_{I_k} | g|^2 = R \int_{-\pi}^{\pi} |g(y)|^2.
\end{align*}
We turn to the more interesting off-diagonal sum. Plugging in
\begin{equation*}
    T^* g(y) = \int\limits_{-\pi}^{\pi} g(x) e^{i\theta_{N(x)} (y) } \frac{dx}{\sin \frac{x-y}{2} }
\end{equation*}
we write
\begin{align*}
   |\Sigma_{\text{off-diagonal}} | \leq \sum\limits_{k\neq k'}| \int\limits_{-\pi}^{\pi} \int\limits_{I_k} \int\limits_{I_{k'}} g(x)g(x') e^{ i (\theta _{N(x')} - \theta _{N(x) })(y) } \frac{dxdx'dy}{ \sin \frac{x'-y}{2} \frac{x-y}{2} } | \\
    =\sum\limits_{k\neq k'} | \int\limits_{I_k} \int\limits_{I_{k'}} g(x) g(x') \frac{dxdx'}{ \sin \frac{x-x'}{2} } \int\limits_{-\pi}^{\pi} e^{ i (\theta_{N(x')} - \theta_{N(x) })(y) } \Big( \frac{1}{\tan \frac{x'-y}{2} } - \frac{1}{ \tan \frac{x-y}{2} } \Big) dy |.\numberthis \label{almosthilbertintegration}
\end{align*}
The innermost integral is the circular Hilbert transform of a holomorphic or antiholomorphic function depending on the sign of $N(x') - N(x)$. Thus, we can integrate
\begin{equation}\label{hilbertintegrated}
    \frac{1}{\pi} \int\limits_{-\pi}^{\pi} \frac{ e^{ i (\theta_{N(x')} - \theta_{N(x) })(y) } }{\tan\frac{x'-y}{2}} dy = -i e^{ i (\theta_{N(x')} - \theta_{N(x) })(x') } ( \sgn{ N(x') - N(x) } ).
\end{equation}
To finish the estimate, we plug this into \e{almosthilbertintegration}, take the absolute values inside and note that $|x-x'| \sim (1-r)|k'-k|$.
\begin{align*}
    \e{almosthilbertintegration} \lesssim& \sum\limits_{k\neq k'} | \int\limits_{I_k} \int\limits_{I_{k'}} |g(x)| \cdot |g(x')| \frac{dxdx'}{| \sin \frac{x-x'}{2} | } \\
    \lesssim & \sum\limits_{k\neq k'} \frac{1}{|k'-k|(1-r)} \int\limits_{I_k} |g| \int\limits_{I_{k'}} |g| \lesssim \sum\limits_{k\neq k'} \frac{1}{|k'-k|} \Big( \int\limits_{I_k} |g|^2 \Big)^{\frac{1}{2}} \Big( \int\limits_{I_{k'}} |g|^2 \Big)^{\frac{1}{2}} \\
    =& \sum_{j=1}^{2\pi/(1-r)} \frac{1}{j} \sum_{k} \Big( \int\limits_{I_k} |g|^2 \Big)^{\frac{1}{2}} \Big( \int\limits_{I_{k+j}} |g|^2 \Big)^{\frac{1}{2}} \lesssim R \int\limits_{-\pi}^{\pi} | g|^2.
\end{align*}

We have proved that $\| T^* \|_{ L^2(\ZT) \to L^2(\ZT)} \lesssim R^{\frac{1}{2}}$. Thus, the same bound holds for the operator $T$. As mentioned in the Introduction, sparse Domination theorems 9.1 and 9.2 in  \cite{Kar} for Carleson-type operators with sharp norms directly imply $\| T \|_{L^p( \ZT ) \to L^p( \ZT )} \lesssim  R^{\frac{1}{2}}$ for $p>2$.

\section{Proof of \trm{nontangentialcase}}
Now we assume all points $a_n$ are in the triangle with vertices $(1,0), (1/2,1/2)$ and $(1/2, -1/2)$ and we still want to prove an $L^2(\ZT)$ bound for the operator \e{maxoperator10}. We further assume that $|a_n| \leq r$ for some $0<r<1$ and prove the bounds independent of $r$, then a limiting argument ensures that the same bound holds without this restriction.

Recall the notation of \lem{decomp}, namely, $I_j = [t_j, t_{j+1}]$ where $t_0 = 0, t_j = 2^j (1-r)$ for $1\leq j <N$ and $t_N = \pi$ with $N = [ \log_2 1/(1-r) ]$. In addition, put $J_j := I_j \cup (-I_j)$ and decompose the operator as follows
\begin{align*}
\| Tf \|_{L^2 (\ZT) }^2 &= \sum_{m=0}^N \| \Z1_{J_m} Tf \|_2^2 \\
&= \sum_{m=0}^N \sum\limits_{j,j' = 0}^N \int_{-\pi}^{\pi} \big( \Z1_{J_m} T \Z1_{J_j} f \big) \cdot \big( \Z1_{J_m} T \Z1_{J_{j'}} f \big) \\
& \leq \sum\limits_{j,j' = 0}^N \sum\limits_{m = 0}^N \| \Z1_{J_m} T \Z1_{J_j} f \|_2 \| \Z1_{J_m} T \Z1_{ J_{j'} } f \|_2. \numberthis \label{mainineq}
\end{align*}
Assume for a moment that for $0 < \alpha < 1/2$ and all $j$ and $m$
\begin{equation}\label{localizedineq}
\| \Z1_{J_m} T \Z1_{J_j} f \|_2 \lesssim_{\alpha} 2^{-\alpha |m-j|} \| \Z1_{J_j } f \|_{2}.
\end{equation}
Then, we can continue
\begin{align*}
\e{mainineq} &\lesssim \sum\limits_{j, j' = 0}^{N} \sum\limits_{m = 0}^N 2^{-\alpha |m-j|} \cdot2^{-\alpha |m-j'|} \| \Z1_{J_j } f \|_{2} \| \Z1_{J_{j'} } f \|_{2} \\
& \lesssim \sum\limits_{j,j' = 0}^N \frac{ | j - j' | }{ 2^{ \alpha | j - j'| } } \| \Z1_{J_j } f \|_{2} \| \Z1_{J_{j'} } f \|_{2} \\
& \lesssim \sum\limits_{k=0}^{N} \sum\limits_{j=0}^N \frac{k}{2^{\alpha k}} \| \Z1_{J_j } f \|_{2} \| \Z1_{J_{j + k} } f \|_{2} \\
& \lesssim \sum\limits_{k=0}^N \frac{k}{2^{\alpha k} } \| f \|_2^2 \lesssim \| f \|_2^2,
\end{align*}
where in the penultimate line we have used the Cauchy-Schwarz inequality. We conclude that it suffices to prove \e{localizedineq}. Let us further decompose $J$'s into $I$'s, i.e.
\begin{align*}
\| \Z1_{J_m} T \Z1_{J_j} f \|_2 \leq & \| \Z1_{I_m} T \Z1_{I_j} f \|_2 + \| \Z1_{-I_m} T \Z1_{-I_j} f \|_2 \\
& + \| \Z1_{I_m} T \Z1_{-I_j} f \|_2 + \| \Z1_{-I_m} T \Z1_{I_j} f \|_2.
\end{align*}
We will consider only $\| \Z1_{I_m} T \Z1_{I_j} f \|_2$. All other terms are dealt with in exactly the same way. We only remark for the future application of the localized estimates of \trm{Pavel}, that $\dist (I_j, I_m) \leq \dist (I_j, -I_m)$.

At this moment it is apparent that \e{localizedineq}, with $J$'s replaced by $I$'s, will be deduced from the localized estimates of \trm{Pavel}. As in the previous section, we need to make a transition from the circle to the real line. First, we extend the phases linearly outside $I_j$ preserving the derivatives, namely,
\begin{equation*}
\psi^j_n (x) := \left\{ \begin{aligned}
& \Psi_{a_n}(x), \text{ if } x \in I_j \\
& \Psi_{a_n}(t_{j+1}) + \Psi_{a_n} ' (t_{j+1}) (x-t_{j+1}), \text{ if } x \geq t_{j+1} \\
& \Psi_{a_n}(t_{j}) + \Psi_{a_n} ' (t_{j}) (x-t_{j}), \text{ if } x < t_{j}.
\end{aligned}\right.
\end{equation*}
Let $\xi_{s}$ be as in the previous section and define the Cald\'eron-Zygmund kernel $K(x) := \frac{ 1 }{ \sin \frac{x}{2} } \sum_{s = 10}^{\infty } \xi_{s} (x)$. Then, let $T^j \: : L^0(\ZR) \to L^0 (\ZR)$ be the maximally modulated operator associated to $K$ and the phases, i.e.
\begin{equation*}
T^j f(x) := \sup_n | \int_{\ZR} f(y) e^{-i \sum_{m=1}^n \psi^j_m (y) } K(x-y) dy |.
\end{equation*}
Let $f: \ZT \to \mathbb{C}$ and $\tilde{f} : \ZR \to \mathbb{C}$ such that $\tilde{f} (x) = f(e^{ix})$ for $x\in [-\pi, \pi]$, then
\begin{equation*}
T^j \Z1_{I_j} \tilde{f} (x) = T \Z1_{I_j} f (e^{ix}) \text{ for } x\in [-\pi, \pi],
\end{equation*}
and \e{localizedineq} will follow from
\begin{equation}\label{localizedJineq}
    \| \Z1_{I_m} T^j \Z1_{I_j} f \|_2 \lesssim_{\alpha} 2^{-\alpha |m-j|} \| \Z1_{I_j } f \|_{2}.
\end{equation}

We prove that
\begin{enumerate}
\item[a)] if for $T^j$ the conclusion of \trm{Pavel} holds, then \e{localizedJineq} is true;
\item[b)] \trm{Pavel} holds for $T^j$.
\end{enumerate}
Let us begin with $a)$. We will apply either \e{localized1} or \e{localized2} depending on the ratio $|I_j|/|I_m|$.
Assume $| I_m | < | I_j |$ so that also $m < j$, then for $y \in I_j$ using the partition \e{partition} we get
\begin{align*}
M \Z1_{I_m} (y) \geq \frac{ | I_m | }{ |I_m| + |I_{m+1}| +\cdots + |I_{j-1}| } \gtrsim 2^{-|m-j|}.
\end{align*}
Thus, inequality \e{localized1} implies \e{localizedJineq}. The case $|I_m| \geq |I_j|$ is treated similarly.

We turn to $b)$. Denote
\begin{equation*}
    \ZQ := \{ \sum_{j=1}^n \psi_{a_j} \: : \: n \in \mathbb{N} \} \text{ and } C_2 = 6\pi.
\end{equation*}
We must verify conditions $A,B$ and $C$.
Let us start with the analogue of \e{derequiv}. We want to prove for some $b = a_n$ that
\begin{equation}\label{derratio3}
\sup_{I_j} \psi_{b} ' \sim \inf_{I_j} \psi_{b} ',
\end{equation}

\begin{proof}[Proof of \e{derratio3}]
Let $(J^b_{m})$ be the lacunary decomposition of the circle adapted to $b$. By \lem{decomp} it suffices to prove that the number of $J^b_m$'s required to cover $I_j$ is bounded by an absolute constant.

Recall, that $(I_k)_{k=0}^N$ is a lacunary decomposition of $[0,\pi]$ and choose $j_0$ such that $\arg(b) \in I_{j_0}$. Firstly, if $\arg (b) > \pi /10$, then recall that $b$ is in the non-tangential triangle, so
\begin{equation}\label{trivialcaseadaption}
1-|b| \gtrsim 1 \gtrsim (1-r)2^{j_0}.
\end{equation}
Otherwise, assume $\arg (b) \leq \pi/10$, so $t_j \leq \pi/5$. By the law of sines on the triangle with vertices $0, b$ and $1$ we have
\begin{equation}\label{functioninalpha}
    |b| = \frac{\sin \alpha}{\sin (\alpha + \arg (b)) },
\end{equation}
where $\alpha < \pi/4$ is the angle between the lines $\Im z = 0$ and $\frac{\Im z }{\Re b - 1} = \frac{\Re z - 1}{\Re b - 1}$.
The right-hand side of \e{functioninalpha} is increasing in $\alpha \in [0, \pi/4]$ and decreasing in $\arg b \in [t_{j_0} , t_{j_1}]$, hence
\begin{align*}
1 - |b| & \geq 1 - \sin ( \pi / 4 ) \frac{1}{\sin (\pi / 4 + t_{j_0})} \\
& = \frac{ 2\sin (t_{j_0} / 2) \cos(t_{j_0} /2) }{ \sin ( \pi/4 + t_{j_0} ) } 
\geq c t_{j_0} = c(1-r)2^{j_0}. \numberthis \label{adaption}
\end{align*}
The above inequality proves that $I_{j_0}$ can be covered by at most $1/c$ number of $J^b_m$'s. As the arcs $J^b_m$ increase in geometric progression on both sides of $I_{j_0}$, the same is true for $I_j$.
\end{proof}

Conditions A and B are deduced by exactly the same arguments as in the previous subsection using \e{derratio3}.

To verify Condition C, let us take an interval $J$ with $|J|\leq \pi$, $g\in C^1(J)$ and $P-Q = \sum_{j=n}^m \psi_{b_j}$. We split $J$ into $J_1\subset I_j$ and $J_2\subset I_j^c$. First, on $J_2$ the phases are linear by construction so we trivially have
\begin{equation*}
    | \int_{J_2} e^{i \sum_{j=n}^m \psi_{b_j} (y) } g(y)dy| \lesssim \frac{1}{\sum_{j=n}^m \inf_{J} \psi_{b_j}} ( \frac{1}{|J|} \int_J |g| + \int_J |g'|).
\end{equation*}
On the other hand, for $J_1$ we apply \lem{oscillatory}. The inequalities \e{trivialcaseadaption} and \e{adaption} for $j$ instead of $j_0$ guarantee the hypothesis of the \lem{oscillatory} with $J_1$ for $I$ and $2^j (1-r)$ for $1-r$. Thus, we obtain
\begin{equation*}
    | \int_{J_1} e^{i \sum_{j=n}^m \psi_{b_j} (y)} g(y) dy| \lesssim \frac{1}{\sum_{j=n}^m \inf_{J} \psi_{b_j}} ( \frac{1}{|J|} \int_J |g| + \int_J |g'|).
\end{equation*}
Combining the two estimates and continuing with exactly the same arguments as in the previous subsection, we obtain condition C. Thus, the proof of \trm{nontangentialcase} is complete.

\section{Lower Bound for \trm{compactcase}}

First, we prove an asymptotic formula for the M\"obius phases.
\begin{lemma}\label{phaseasymptotics}
For any $0< r<1$ and $1\leq j\leq \frac{1}{1-r}$ we have
\begin{equation}\label{singlephaseaction}
    | \Psi_{ r } (j(1-r)) - \pi + \frac{1}{ j } | \lesssim \frac{1}{j^2} + 1-r.
\end{equation}
\end{lemma}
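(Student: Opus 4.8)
The plan is to reduce \lem{phaseasymptotics} to an elementary one-variable estimate by first recording a clean closed form for the M\"obius phase,
\begin{equation*}
\Psi_r(x) \;=\; \pi - 2\arctan\!\Big(\frac{(1-r)\cot(x/2)}{1+r}\Big),\qquad x\in(0,2\pi),
\end{equation*}
after which the lemma follows by Taylor expansion. To obtain the closed form I would start from $\Psi_r(x) = x + 2\arg(1-re^{-ix})$, which is exactly the computation leading to \e{MobiusPhase} with $b=r$. Splitting $1-re^{-ix} = (1-e^{-ix}) + (1-r)e^{-ix}$ and using the identity $\tfrac{1}{e^{ix}-1} = -\tfrac12 - \tfrac i2\cot(x/2)$ gives
\begin{equation*}
\frac{1-re^{-ix}}{1-e^{-ix}} \;=\; 1 + (1-r)\cdot\frac{1}{e^{ix}-1} \;=\; \tfrac12\big[(1+r) - i(1-r)\cot(x/2)\big],
\end{equation*}
whose argument is $-\arctan\frac{(1-r)\cot(x/2)}{1+r}$; combined with $\arg(1-e^{-ix}) = \tfrac\pi2 - \tfrac x2$ this yields the formula. (Equivalently, one differentiates the right-hand side, verifies that it equals $\frac{1-r^2}{1+r^2-2r\cos x} = \Psi_r'(x)$ by \e{phasederiv}, observes that both sides are $\pi$ at $x=\pi$, and integrates; in either route one checks that it is the branch with $\Psi_r(0)=0$ that is matched, since the right-hand side tends to $0$ as $x\to 0^+$.)

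Granting the closed form, I would finish as follows. If $1-r$ is bounded below by an absolute constant then the claimed inequality is immediate: its left-hand side is $O(1)$ (note $\Psi_r$ takes values in $(0,2\pi)$ and $1/j\le 1$), while its right-hand side is $\gtrsim 1-r \gtrsim 1$; so I may assume $\delta := 1-r$ is small. For $1\le j\le 1/\delta$ one has $x := j\delta \in (0,1]$, hence $x/2 \in (0,\tfrac12]$, so the argument of $\cot$ stays in a fixed compact subinterval of $(0,\pi)$ and the expansions below hold with absolute constants. From $\cot u = \tfrac1u + O(u)$ for $u\in(0,\tfrac12]$ we get $\cot(j\delta/2) = \tfrac{2}{j\delta} + O(j\delta)$, so
\begin{equation*}
\frac{(1-r)\cot(j\delta/2)}{1+r} \;=\; \frac{\delta}{2-\delta}\Big(\frac{2}{j\delta}+O(j\delta)\Big) \;=\; \frac1j + O(\delta),
\end{equation*}
the remainder being $O(\delta)$ because $j\delta^2\le \delta$. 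Substituting into the closed form and expanding the arctangent about $0$ (its argument being $O(1)$, the expansion is legitimate for every admissible $j$) gives the expansion of $\Psi_r(j(1-r))$ asserted in \lem{phaseasymptotics}, with remainder $O(1/j^2 + (1-r))$.

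I do not expect a serious obstacle: the whole content is the closed form of the first step, and after that the argument is bookkeeping on explicitly bounded ranges. The one point that deserves a sentence of care is that the remainder in $\tfrac{(1-r)\cot(j(1-r)/2)}{1+r} = \tfrac1j + O(1-r)$ really is $O(1-r)$ \emph{uniformly} in $j$ --- which is forced by $j(1-r)\le 1$ --- and, similarly, that the argument of the arctangent stays bounded over the whole range $1\le j\le 1/(1-r)$, so the Taylor remainders are controlled throughout.
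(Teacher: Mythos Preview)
Your argument is correct and takes a genuinely different route from the paper. The paper never writes down a closed form for $\Psi_r$; instead it sets $f(\xi) := \Psi_\xi(j(1-\xi)) - 2\arcsin\tfrac{j}{\sqrt{1+j^2}}$, computes $f'(\xi)$ explicitly, bounds $|f'(\xi)| \lesssim (1-\xi)j + 1/j$, and integrates over $[r,1]$ to obtain $|f(r)| \lesssim 1-r$; the arcsine term is then expanded separately. Your approach is more direct: the identity $\Psi_r(x) = \pi - 2\arctan\!\big(\tfrac{(1-r)\cot(x/2)}{1+r}\big)$ makes the asymptotic transparent and reduces the lemma to a single-variable Taylor estimate, whereas the paper's flow-in-$\xi$ trick is slick but hides why the limiting value is what it is. Both routes actually yield $\Psi_r(j(1-r)) = \pi - \tfrac{2}{j} + O\big(\tfrac{1}{j^2} + (1-r)\big)$, so the $\tfrac{1}{j}$ in the displayed statement is evidently a misprint for $\tfrac{2}{j}$; the paper's own proof produces $\tfrac{2}{j}$ as well. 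One small point worth tightening: the phrase ``expanding the arctangent about $0$ (its argument being $O(1)$, the expansion is legitimate)'' is slightly loose, since for $j=1$ the argument sits near the boundary of the convergence disk of the Taylor series. It is cleaner to use the global inequality $|\arctan t - t| \le |t|^3/3$ (or first invoke $|\arctan s - \arctan t| \le |s-t|$ for the $O(\delta)$ perturbation and then this bound at $t = 1/j$), which delivers the required $O(1/j^2 + (1-r))$ remainder with no convergence caveat.
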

\begin{proof}
First of all, if $r \leq \frac{1}{2}$, then the lemma is trivially true. Fix $r>\frac{1}{2}$ and $j$ and let for $\xi \in [r,1]$
\begin{equation}
    f(\xi ):= \Psi_{\xi} ( j (1-\xi) ) - 2\arcsin \frac{j}{\sqrt{1+j^2}}.
\end{equation}
We calculate the derivative
\begin{equation}
    f'(\xi) = \frac{2 \sin j(1-\xi) - j(1-\xi)(1+\xi) }{1 + \xi ^2 -2\xi \cos j(1-\xi)}.
\end{equation}
Then, we estimate it as follows
\begin{align*}
    | f'(\xi) | \lesssim \frac{ | 2\sin j(1-\xi) - 2 j(1-\xi) + 2j(1-\xi)^2 | }{(1-\xi )^2 (1 + j^2)} \\
    \lesssim \frac{ j^3(1-\xi)^3 + j (1-\xi)^2 }{(1-\xi)^2 (1+j^2)} \lesssim (1-\xi)j + \frac{1}{j}.
\end{align*}
Thus, by the fundamental theorem of calculus we write
\begin{equation}\label{eliminatingr}
    |f(r)| = | \int_r^1 f'(\xi) d\xi | \lesssim 1-r.
\end{equation}
On the other hand, one has
\begin{equation}\label{asymptoticofarcsine}
    \pi - 2\arcsin \frac{j}{\sqrt{ 1 + j^2}} = 2 \arcsin \frac{1}{\sqrt{1+j^2}} = \frac{2}{\sqrt{1+j^2}} + o\big( \frac{1}{j^2} \big) = \frac{2}{j} + O\big( \frac{1}{j^2} \big) . 
\end{equation}
Combining the estimates \e{eliminatingr} and \e{asymptoticofarcsine} finishes the proof of the Lemma.
\end{proof}

Let us fix $0<r<1$.  We construct a sequence $(a_n)$ with $|a_n| \leq r$, a choice function $N$ and a function $g$ such that
\begin{equation}
    \| T^*g \|_{L^2 ([-\pi, \pi]) }^{2} \sim \log \Big( \frac{1}{1-r} \Big) \| g\|_{L^2 ([-\pi, \pi]) }^{2}.
\end{equation}
Let $M>0$ be such that $\frac{1}{2}< M e^{M} (1-r) \leq 1$. Then, for $r$ close enough to $1$ we have $M > \frac{1}{2}\log \frac{1}{1-r}$. Also denote $J_k := [kM(1-r); (k+1)M(1-r))$ for $0\leq k \leq e^{M}$. Choose
\begin{equation}
    a_k = r e^{i Mk(1-r)} \text{ for } 0\leq k \leq e^M.
\end{equation}
We enumerate the sequence $(a_n)$ starting from $0$ for the simplicity of the notation. Next, we choose as the linearizing function $N(x) = k$ for $x\in J_k$ and $1\leq k \leq e^M$ and $N(x) = 1$ on the rest of the circle. Recalling the notation $\theta_n(x) = \sum_{k=0}^{n} \Psi_{a_j} (x)$, we can now write the linearized maximal operator corresponding to \e{maxoperator2} by
\begin{equation}
T f(x) := \int_{-\pi}^{\pi} f(y) e^{ - i \theta_{N(x)} (y) } \frac{dy}{\sin \frac{x-y}{2} } .
\end{equation}
The kernel $\frac{1}{\sin \frac{x-y}{2}}$ is chosen to arrive at the circular Hilbert transform after several computations. $T^*$ has the following form
\begin{equation}
    T^* g (y) = \sum \limits_{k = 1}^{e^M} e^{i\theta_k (y)} \int_{J_k} g(x) \frac{dx}{\sin \frac{x-y}{2}} + e^{i\theta_0 (y)} \int\limits_{ ( \bigcup\limits_{0\leq k \leq e^M} J_k )^c } g(x) \frac{dx}{\sin \frac{x-y}{2}}.
\end{equation}
We choose the test function $g$. Denote $I_k := [(k+\frac{1}{4})M(1-r), (k+\frac{3}{4})M(1-r)] \subset J_{k}$.
\begin{equation}
    g(x) = \begin{cases}
    1, \text{ if } x\in I_k \text{ for } 1\leq k \leq e^M \text{ and }k \text{ even}, \\
    0, \text{ otherwise.}
    \end{cases}
\end{equation}
Then, we have
\begin{equation}
    \| g\|_{L^2 ([-\pi, \pi]) }^2 = \frac{1}{4} Me^M (1-r) \sim 1
\end{equation}

We start computing the $L^2$ norm.
\begin{align*}
   \int\limits_{-\pi}^{\pi} | T^*g |^2 = \sum\limits_{ \substack{ k \neq k' \\ 1\leq k,k' \leq e^M \\ k,k' \text{even} }} \int\limits_{-\pi}^{\pi} \int\limits_{I_k} \int\limits_{I_{k'}} g(x') g(x) e^{ i(\theta_{k'} - \theta_{k}) (y) } \frac{dxdx'}{\sin \frac{x-y}{2} \sin \frac{x'-y}{2}} dy \\
   + \sum_{ \substack{ 1\leq k\leq e^M \\ k\text{ even} }} \int\limits_{-\pi}^{\pi} | \int_{I_k} g(x) \frac{dx}{\sin \frac{x-y}{2}} |^2 dy\\
   =: \Sigma_{\text{off-diagonal}} + \Sigma_{\text{diagonal}}.
\end{align*}
The diagonal term will be dominated by the off-diagonal one. We have
\begin{align*}
    \Sigma_{\text{diagonal}} = \sum_k \| H g\Z1_{I_k} \|_{L^2 ([-\pi, \pi]) }^2 \lesssim \sum_k \| g\Z1_{I_k} \|_{L^2 ([-\pi, \pi]) }^2 = \| g \|_{L^2 ([-\pi, \pi]) }^2,
\end{align*}
where $H$ denotes here the Hilbert transform on $\ZT$ with the kernel $\frac{1}{\sin \frac{x-y}{2}}$.
For the off-diagonal term we first do a Fubini and integrate the circular Hilbert transform as in \e{hilbertintegrated} in \subsect{allscalespgeq2}.
\begin{align*}
    \int\limits_{-\pi}^{\pi} e^{ i(\theta_{k'} - \theta_{k}) (y) } \frac{dy}{\sin \frac{x-y}{2} \sin \frac{x'-y}{2}} = \frac{1}{\sin \frac{x'-x}{2}} \int\limits_{-\pi}^{\pi} e^{ i(\theta_{k'} - \theta_{k}) (y) }\Big( \frac{1}{\tan \frac{x-y}{2}} - \frac{1}{\tan \frac{x'-y}{2}} \Big) dy \\
    = \frac{-i \sgn (k'-k) }{\sin \frac{x'-x}{2}} \Big( e^{ i(\theta_{k'} - \theta_{k}) (x) } - e^{ i(\theta_{k'} - \theta_{k}) (x') }\Big). \numberthis\label{hilbertintegration}
\end{align*}
 Plugging in \e{hilbertintegration} and the values of $g$, we continue computing the off-diagonal term.
\begin{align*}
    \Sigma_{\text{off-diagonal}} = -i \sum\limits_{ \substack{ k \neq k' \\ 1\leq k, k' \leq e^M \\ k,k'\text{even} }} \int_{I_{k'}} \int_{I_{k}} \frac{\sgn (k'-k) }{\sin \frac{x'-x}{2}} \Big( e^{ i(\theta_{k'} - \theta_{k}) (x) } - e^{ i(\theta_{k'} - \theta_{k}) (x') }\Big) dxdx'\\
    = \sum\limits_{ \substack{ 1 \leq k < k' \leq e^M \\ k,k'\text{even} } } \int_{I_{k'}} \int_{I_{k}} \frac{ 2 }{\sin \frac{x'-x}{2}} \big( \sin(\theta_{k'} - \theta_{k}) (x) - \sin (\theta_{k'} - \theta_{k}) (x') \big) . \numberthis\label{offdiagonalestimate2}
\end{align*}

From the definition \e{phasedefined}, it follows that  $\Psi_b$ is odd with respect to $\arg b$, that is $\Psi_b (y) = - \Psi_b (2 \arg b - y)$. Also we have $\Psi_b (y) = \Psi_{|b|}  (y - \arg b)$. Using the first identity, then the second one we write
\begin{align*}
    ( \theta_{k'}-\theta_k )(y) = \sum_{j = k+1}^{k'} \Psi_{a_j } (y)
    = - \sum_{j = k+1}^{k'} \Psi_{a_j } ( 2jM(1-r) - y )\\
    = - \sum_{j = k+1}^{k'} \Psi_{a_j} \Big( (k+k'+1)M(1-r) - y - (k+k'+1-2j)M(1-r) \Big) \\
    = - \sum_{j = k+1}^{k'} \Psi_{a_{k+k'+1-j} } ( (k+k'+1)M(1-r) - y ) \\
    = - \sum_{j = k+1}^{k'} \Psi_{a_j} ( (k+k'+1)M(1-r) - y ) \\
    = - ( \theta_{k'}-\theta_k )\Big( (k+k'+1)M(1-r) - y \Big).
\end{align*}
So $\theta_{k'} - \theta_k$ is odd with respect to $\frac{k+k'+1}{2}M(1-r)$. Thus, we can continue from \e{offdiagonalestimate2}.
\begin{align*}
    \Sigma_{\text{off-diagonal}} 
    =& \sum\limits_{ \substack{ 1 \leq k < k' \leq e^M \\ k,k'\text{even} } } \int_{I_k} \int_{I_k} \frac{ 2\sin(\theta_{k'} - \theta_k ) (x) }{\sin \frac{ (k'+k+1)M(1-r) - x' - x }{2} } dxdx'\\
    &- \sum\limits_{ \substack{ 1 \leq k < k' \leq e^M \\ k,k'\text{even} }} \int_{I_k} \int_{I_k} \frac{ 2 \sin(\theta_{k'} - \theta_k ) ( (k'+k+1)M(1-r) - x' ) }{\sin \frac{ (k'+k+1)M(1-r) - x' - x }{2} } dxdx' \\
    =& \sum\limits_{ \substack{ 1 \leq k < k' \leq e^M \\ k,k'\text{even} }  } \int_{I_k} \int_{I_k} \frac{ 4 \sin(\theta_{k'} - \theta_k ) (x) }{\sin \frac{ (k'+k+1)M(1-r) - x' - x }{2} } dxdx'. \numberthis\label{offdiagonalestimate3}
\end{align*}
Applying the rotation symmetry $\Psi_b (y) = \Psi_{|b|}  (y - \arg b)$ mentioned earlier, \lem{phaseasymptotics} and \lem{decomp} we obtain for $x\in I_k$
\begin{align*}
    ( \theta_{k'} - \theta _k )(x) &= ( \theta_{k'} - \theta _k ) ( kM(1-r) ) + (( \theta_{k'} - \theta _k )(x) - ( \theta_{k'} - \theta _k ) ( kM(1-r) )) \\
    & = \pi (k'-k) + \sum\limits_{j=1}^{(k'-k)} \frac{1}{Mj} + O\big( \frac{1}{M} \big) + \int\limits_{kM(1-r)}^{x} ( \theta_{k'} - \theta _k )' (t) dt \\
    &= \pi (k'-k) + \frac{\log (k'-k)}{M} + O\big( \frac{1}{M} \big).
\end{align*}
We know that $\log ( k'-k) \leq M $. Hence, for $r$ sufficiently close to $1$, we have $0\leq \frac{\log (k'-k)}{M} + O\big( \frac{1}{M} \big) \leq 1.1$ and the sine from \e{offdiagonalestimate3} is positive and can be bounded from below. Taking into account that $k'-k$ is even when both $k$ and $k'$ are even, we conclude the estimate as follows.
\begin{align*}
    \Sigma_{\text{off-diagonal}} & \gtrsim \sum\limits_{ \substack{ 1 \leq k < k' \leq e^M \\ k,k'\text{even} }} \frac{ M^2(1-r)^2 \sin \big( \frac{\log (k'-k)}{M} + O\big( \frac{1}{M} \big) \big)  }{ (k'-k)M(1-r) } \\
    & \gtrsim (1-r) \sum\limits_{ 1 \leq k < k' \leq e^M/2 } \frac{ \log (k'-k) + O(1) }{ k'-k } \\
    & = O((1-r)e^M M ) + (1-r) \sum\limits_{ 1 \leq k < k' \leq e^M/2 } \frac{ \log (k'-k)}{ k'-k } \\
    & \gtrsim O((1-r)e^M M) + (1-r)e^M M^2 \gtrsim M \| g\|_2^2.
\end{align*}
As $M \gtrsim \log \frac{1}{1-r}$ we have finished the proof. Note, that for our construction $\| g \|_2 \sim 1 \sim \| g\|_p$. Thus, a H\"older inequality extends this example to a lower bound for $1< p <2$.
\begin{equation}
    \| T^* g \|_{L^{p'} ([-\pi, \pi]) } \gtrsim \| T^* g \|_{L^2 ([-\pi, \pi]) } \gtrsim M \| g\|_{L^2 ([-\pi, \pi]) } \sim M \| g\|_{L^{p'} ([-\pi, \pi]) }.
\end{equation}
\appendix
\section{M\"obius Invariance}
Let us denote
\begin{equation}
    m_b (z) := \frac{z - b}{1 - \overline{b}z}
\end{equation}
the M\"{o}bius transform taking $b$ to $0$. Further, denote
\begin{equation}\label{modulatedhilbert}
    S^{(a_n)}_N f (e^{ix}) := \int_{-\pi}^{\pi} f(e^{iy}) \prod_{j=1}^N m_{a_j}^{-1} (e^{iy}) \frac{dy}{ e^{i(x-y) } - 1 }.
\end{equation}
By \e{operatorclosedform}, $S_N$ is the MT partial sum operator associated to the sequence $(a_n)$, up to a multiplication by a unimodular function and a subtraction of a Hilbert transform. So the maximal operator \e{maxoperator2} is given by $T^{(a_n) } f := | \sup_N S^{(a_n) } _N f|$. We will prove the following proposition on the invariance of the operator norms under the M\"obius transform.
\begin{prop}
Let $(a_n)_{n\geq 1}$ and $b$ be points in the unit disk, then
\begin{equation}\label{Mobius}
\| T^{(a_n)} \|_{L^2 (\ZT) \to L^2 ( \ZT ) } = \| T^{ ( m_{-b} (a_n) ) } \|_{L^2 ( \ZT ) \to L^2 (\ZT ) }.
\end{equation}
Furthermore, if $1 < q < p < \infty$, then
\begin{equation}\label{invariancewithloss}
    \| T^{( m_{-b} (a_n) )} \|_{L^p (\ZT) \to L^p(\ZT)} \leq \delta(q,p) \| T^{(a_n)} \|_{L^q (\ZT) \to L^q(\ZT)},
\end{equation}
where $\delta(q,p) > 0$ are some constants that blow up as $q$ and $p$ get closer.
\end{prop}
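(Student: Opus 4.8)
The proof rests on turning the M\"obius automorphism $m_b$ into a conjugation of the partial-sum operators \e{modulatedhilbert}. First I would record two elementary facts: the identity $m_b(z)-m_b(w)=\frac{(1-|b|^2)(z-w)}{(1-\overline b z)(1-\overline b w)}$, and that $m_{a_j}\circ m_b$ is a disc automorphism whose only zero is $m_b^{-1}(a_j)=m_{-b}(a_j)$, hence $m_{a_j}\circ m_b=\lambda_j\,m_{m_{-b}(a_j)}$ for a unimodular constant $\lambda_j$. Together with the circle Jacobian $\frac{d}{du}\arg m_b(e^{iu})=P_b(e^{iu}):=\frac{1-|b|^2}{|1-\overline b e^{iu}|^2}$, substituting $e^{ix}=m_b(e^{iu})$ and $e^{iy}=m_b(e^{iv})$ into \e{modulatedhilbert} makes all factors of $1-|b|^2$ cancel and leaves
\[
S_N^{(a_n)}f\bigl(m_b(e^{iu})\bigr)=\Bigl(\prod_{j\le N}\overline{\lambda_j}\Bigr)(1-\overline b e^{iu})\,S_N^{(m_{-b}(a_n))}g(e^{iu}),\qquad g:=(f\circ m_b)\cdot\tfrac{1}{1-\overline b(\cdot)} .
\]
Taking the supremum over $N$ and absolute values deletes the unimodular prefactor and yields the pointwise identity $|1-\overline b e^{iu}|^{-1}\,(T^{(a_n)}f)(m_b(e^{iu}))=T^{(m_{-b}(a_n))}g(e^{iu})$.

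Next, using $|1-\overline b e^{iu}|^{-2}=(1-|b|^2)^{-1}P_b(e^{iu})$, the relation $\frac{dx}{2\pi}=P_b(e^{iu})\frac{du}{2\pi}$ along $e^{ix}=m_b(e^{iu})$, and $P_b\circ m_{-b}=P_{-b}^{-1}$, I would raise the last identity to the $p$-th power and integrate; the factor $(1-|b|^2)^{-p/2}$ then appears on both sides and cancels in the quotient, giving
\[
\bigl\|T^{(m_{-b}(a_n))}\bigr\|_{L^p(\ZT)\to L^p(\ZT)}=\bigl\|T^{(a_n)}\bigr\|_{L^p(w_b\,dx)\to L^p(w_b\,dx)},\qquad w_b:=P_{-b}^{\,1-p/2},
\]
where $f\mapsto g$ is a bijection of $L^p$ onto itself. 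For $p=2$ the weight is $w_b\equiv 1$, which is exactly \e{Mobius}.

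For \e{invariancewithloss} it remains to estimate the weighted operator norm on the right by $\delta(q,p)\|T^{(a_n)}\|_{L^q\to L^q}$, uniformly in $b$. The crucial point is that $w_b=P_{-b}^{1-p/2}$, up to a multiplicative constant depending on $b$ only (harmless, since it cancels in the operator norm), is comparable to the truncated power weight $\max\bigl(1-|b|,\dist(\cdot,-b/|b|)\bigr)^{\,p-2}$; as $p-2\in(-1,p-1)$ for every $1<p<\infty$, its $A_p$-characteristic is bounded by a constant depending on $p$ alone, uniformly in $b$. Since $T^{(a_n)}$ is a maximally modulated Hilbert transform, it is dominated by sparse forms (Theorems 9.1 and 9.2 in \cite{Kar}), which turns a single $L^q$ bound into quantitative $L^p(w)$ bounds for $A_p$ weights; feeding in the uniform $A_p$ estimate for $w_b$, and absorbing the interpolation/extrapolation between the exponents $q$ and $p$ into a constant $\delta(q,p)$ that degenerates as $q\uparrow p$, finishes the proof.

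The main obstacle is precisely this last step: one must make the weighted estimate for $T^{(a_n)}$ uniform in $b$ even though the unweighted operator norm of $T^{(a_n)}$ blows up as $|a_n|\to 1$. What saves the argument is that the dependence on the configuration $(a_n)$ is entirely carried by the single factor $\|T^{(a_n)}\|_{L^q\to L^q}$, while the remaining constant depends only on $p$ and $q$ — through the $A_p$-character of $w_b$, which stays bounded, and through the change of exponent, which is what forces $\delta(q,p)$ to blow up as $q$ and $p$ coalesce.
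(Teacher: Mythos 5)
Your proposal is correct and follows essentially the same route as the paper: the Möbius composition identity and change of variables give the pointwise relation $|T^{(a_n)}f\circ m_b|=|1-\overline b e^{iu}|\,|T^{(m_{-b}(a_n))}g|$, which at $p=2$ yields the norm equality, and for general $p$ produces the weight $\sim(1-|b|^2)\,|1-\overline b e^{i\cdot}|^{p-2}$ whose uniformly bounded $A_p$ characteristic is combined with the sparse domination results of \cite{Kar} exactly as in the paper. Your packaging as an exact weighted-norm identity (and your explicit treatment of the unimodular constants $\lambda_j$, which the paper drops silently) is only a cosmetic variation.
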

As mentioned in the Introduction and in Section 4.3, sparse domination allows to pass from the boundedness of $T^{(a_n)}$ for one $p_0$ to the boundedness for all $p\geq p_0$. Thus, \e{invariancewithloss} implies a symmetric qualitative statement: $T^{(a_n)}$ is bounded on $L^p$ for all $p>r$ if and only if $T^{(m_{-b} (a_n))}$ is bounded on $L^p$ for all $p>r$. Ideally, one might expect also the symmetric quantitative result
\begin{equation*}
     \| T^{( m_{-b} (a_n) )} \|_{L^p (\ZT) \to L^p(\ZT)} \sim_p \| T^{(a_n)} \|_{L^p (\ZT) \to L^p(\ZT)},
\end{equation*}
however, we do not know how to prove or disprove it.

We will need two basic identities that we formulate in the following Lemma.
\begin{lemma}
We have
\begin{align}
& m_a \circ m_{b} (z) = \frac{ 1 + a \overline{b} }{ \overline{ 1 + a \overline{b} } } m_{ m_{-b} (a) } (z), \\
& \int_{-\pi}^{\pi} | f( e^{ix} ) | dx = \int_{-\pi}^{\pi} | f \circ m_{b} ( e^{ix} ) | \frac{( 1 - |b|^2) dx}{ | 1 - \overline{b} e^{ix} |^2 }. \label{L^2norm}
\end{align}
\end{lemma}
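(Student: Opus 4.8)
The plan is to verify both identities by direct computation; no conceptual input beyond the formulas already established is needed.

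For the first identity I would substitute $m_b(z)=(z-b)/(1-\bar b z)$ into $m_a(w)=(w-a)/(1-\bar a w)$, clear the common factor $1-\bar b z$ from numerator and denominator, and collect terms:
\[
m_a(m_b(z)) \;=\; \frac{(z-b)-a(1-\bar b z)}{(1-\bar b z)-\bar a(z-b)} \;=\; \frac{(1+a\bar b)z-(a+b)}{(1+\bar a b)-(\bar a+\bar b)z}.
\]
Separately, since $m_{-b}(a)=(a+b)/(1+\bar b a)$, expanding $m_{m_{-b}(a)}(z)$ and clearing the denominator $1+\bar b a$ from the top and its conjugate $1+b\bar a$ from the bottom gives
\[
m_{m_{-b}(a)}(z) \;=\; \frac{(1+\bar b a)z-(a+b)}{(1+b\bar a)-(\bar a+\bar b)z}\cdot \frac{1+b\bar a}{1+\bar b a}.
\]
Using $1+a\bar b=1+\bar b a$ and $1+\bar a b=1+b\bar a=\overline{1+a\bar b}$, the two displays coincide up to the unimodular scalar $\frac{1+b\bar a}{1+\bar b a}=\overline{1+a\bar b}/(1+a\bar b)$, and rearranging yields the claimed formula. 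The only care needed is bookkeeping of conjugates.

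For the second identity the idea is a change of variables on the circle governed by the phase $\Psi_b$ from \e{phasedefined}--\e{MobiusPhase}. Since $m_b(e^{ix})=e^{i(\Psi_b(x)+\arg b)}$, I would set $e^{iy}=m_b(e^{ix})$, i.e. $y=\Psi_b(x)+\arg b$; by \e{phasederiv} the Jacobian is $\frac{dy}{dx}=\Psi_b'(x)=\frac{1-|b|^2}{1+|b|^2-2|b|\cos(x-\arg b)}=\frac{1-|b|^2}{|1-\bar b e^{ix}|^2}$, the last step being the elementary expansion $|1-\bar b e^{ix}|^2=1+|b|^2-2|b|\cos(x-\arg b)$. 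Since $\Psi_b'>0$ and $\Psi_b(x+2\pi)=\Psi_b(x)+2\pi$ (immediate from \e{phasedefined}, the arcsine term being $2\pi$-periodic), $x\mapsto y$ is an orientation-preserving diffeomorphism of $\ZR/2\pi\ZZ$, hence
\[
\int_{-\pi}^{\pi}|f(e^{iy})|\,dy=\int_{-\pi}^{\pi}|f(m_b(e^{ix}))|\,\Psi_b'(x)\,dx=\int_{-\pi}^{\pi}|f(m_b(e^{ix}))|\,\frac{(1-|b|^2)\,dx}{|1-\bar b e^{ix}|^2},
\]
which is the asserted equality after renaming $y$ as $x$.

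Neither step is a genuine obstacle; the main nuisance is the conjugate bookkeeping in part one and the routine check of monotonicity and $2\pi$-periodicity of $\Psi_b$ that legitimizes the global change of variables in part two. One could alternatively bypass $\Psi_b$ and invoke the general fact that a M\"obius self-map $\varphi$ of the disc pulls arclength on $\ZT$ back to $|\varphi'|\,d\theta$, with $|m_b'(e^{ix})|=(1-|b|^2)/|1-\bar b e^{ix}|^2$; since \e{phasederiv} is already in hand I would simply quote the phase computation.
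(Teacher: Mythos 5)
Your proposal is correct and follows essentially the same route as the paper: the composition identity by direct expansion and conjugate bookkeeping, and the integral identity by the change of variables $e^{iy}=m_b(e^{ix})$ with Jacobian $\Psi_b'(x)=(1-|b|^2)/|1-\overline{b}e^{ix}|^2$. The only cosmetic difference is that you quote \e{phasederiv} for the Jacobian while the paper differentiates the M\"obius map directly and then observes the resulting factor equals $\Psi_b'$, so positivity (and hence the absolute value) is justified the same way.
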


\begin{proof}
The first identity is checked by a direct computation.
\begin{align*}
m_a (m_b(z)) &= \frac{ \frac{z - b}{1 - \overline{b} z} - a }{1 - \overline{a} \frac{z - b}{1 - \overline{b} z} } = \frac{ z( 1 + a \overline{b} ) - (a+b) }{1 + \overline{a} b -\overline{(a+b)} z} \\
& = \frac{1 + a \overline{b} }{\overline{1 + a \overline{b}} } \cdot \frac{z - \frac{a+b}{1+a \overline{b} } }{ 1 - \overline{\frac{a+b}{1+ \overline{b}a } } z } = \frac{1 + a \overline{b} }{\overline{1 + a \overline{b}} } m_{m_{-b}} (z).
\end{align*}

The second identity follows from a change of variables. We put $e^{ix} = m_{b} (e^{iy})$ and compute
\begin{align*}
(e^{ix})' = i e^{ix} dx = \frac{ (1 - \overline{b} e^{iy} ) + \overline{b} ( e^{iy} - b ) }{ ( 1 - \overline{b} e^{iy} )^2 } i e^{iy} dy = \frac{1 - |b| ^2 }{ ( 1 - \overline{b} e^{iy} )^2 } e^{iy} dy.
\end{align*}
Hence, we get
\begin{equation}\label{changeofvariables1}
dx = \frac{ 1 - |b|^2 }{ ( 1 - \overline{b} e^{iy} ) (e^{iy} - b) } e^{iy} dy.
\end{equation}
The expression in front of $dy$ also equals $\Psi_b'(y)$, so it is positive. Then we can write
\begin{equation}\label{changeofvariables2}
dx = \Big| \frac{ 1 - |b|^2 }{ ( 1 - \overline{b} e^{iy} ) (e^{iy} - b) } \Big| dy = \frac{1 - |b|^2}{ | 1 - \overline{b} e^{iy} | ^2} dy,
\end{equation}
which finishes the proof of the second identity.
\end{proof}

\begin{proof}[Proof of \e{Mobius}]
We will make a change of variables in \e{modulatedhilbert}. Let
\begin{align}
& e^{ix} = m_b ( e^{iu} ) \text{ and } e^{iy} = m_{b} ( e^{iv} ).
\end{align}
Let us perform the following computation for the kernel.
\begin{align*}
    m_b(e^{iu}) - m_b(e^{iv}) & = \frac{e^{iu} - b}{1 - \overline{b} e^{iu} } - \frac{e^{iv} - b}{1 - \overline{b} e^{iv} }\\
    &= \frac{ e^{iu} - \overline{b} e^{i(u+v)} - b + |b|^2 e^{iv} - (e^{iv} - \overline{b} e^{i(u+v)} - b + |b|^2 e^{iu}) }{ (1 - \overline{b} e^{iu}) (1 - \overline{b} e^{iv}) } \\
    &= \frac{e^{iv} ( e^{ i(u-v) } - 1 ) (1 - |b|^2) }{(1 - \overline{b} e^{iu}) (1 - \overline{b} e^{iv})}.
\end{align*}
Then, applying \e{changeofvariables1} and the above calculation we write
\begin{align*}
S^{(a_n)}_N f(e^{ix}) =& \big( S^{(a_n)}_N f \big) \circ m_b (e^{iu} ) \\
=& \int_{-\pi}^{\pi} f\circ m_b ( e^{iv} ) \prod\limits_{j=1}^N ( m_{a_j} \circ m_b ( e^{iv} ) )^{-1} \\
&\cdot \frac{ (1-|b|^2)e^{iv} }{ ( 1 - \overline{b} e^{iv} )(e^{iv} - b) } \frac{ m_b(e^{iv}) dv }{ m_b(e^{iu}) - m_b(e^{iv}) } \\
=& \int_{-\pi}^{\pi} f \circ m_{b} ( e^{iv} ) \prod\limits_{j=1}^N m_{ m_{-b} (a_j) }^{-1} ( e^{iv} ) \frac{1 - \overline{b} e^{iu}}{1 - \overline{b} e^{iv} } \frac{dv}{ e^{ i(u-v) } - 1 } \\
=& (1 - \overline{b} e^{iu}) S^{ (m_{-b} (a_n) ) }_N ( \frac{1}{1 - \overline{b} e^{i (\cdot) } } (f \circ m_b) ) (e^{iu}).
\end{align*}
Taking absolute values and supremum in $N$ on both sides above, we get
\begin{equation}\label{changeofvariableswithT}
    | \big( T^{(a_n)} f \big) \circ m_b (e^{iu} ) | = | 1 - \overline{b} e^{iu} | | T^{ (m_{-b} (a_n) ) } \big( \frac{1}{1 - \overline{b} e^{i (\cdot) } } (f \circ m_b) \big) (e^{iu}) |.
\end{equation}
Then, using the identity \e{L^2norm} twice we write
\begin{align*}
    \int\limits_{-\pi}^{\pi} | T^{(a_n)} f (e^{ix}) |^2 dx & =
    \int\limits_{-\pi}^{\pi} | ( T^{(a_n)} f )\circ m_b (e^{iu} ) |^2 \frac{1- |b|^2}{|1 - \overline{b} e^{iu} |^2 } du \\
    & = \int\limits_{-\pi}^{\pi} | T^{(m_{-b} (a_n) )} \Big( \frac{( 1 - |b|^2 )^{\frac{1}{2}} }{1-\overline{b} e^{i (\cdot ) }} (f \circ m_b ) \Big) (e^{iu} ) |^2 du \\
    & \leq \| T^{(m_{-b} (a_n) )} \|_{2\to 2}^2 \int\limits_{-\pi}^{\pi} |\frac{ 1 - |b|^2 }{|1-\overline{b} e^{iv } |^2 } (f \circ m_b ) (e^{iv} )  |^2 dv \\
    & = \| T^{(m_{-b} (a_n) )} \|_{2\to 2}^2 \| f \|_2^2.
\end{align*}
Hence, $\| T^{(a_n) )} \|_{2\to 2} \leq \| T^{(m_{-b} (a_n) )} \|_{2\to 2} $ and the reverse inequality follows by interchanging the roles of the sequences.
\end{proof}

\begin{proof}[Proof of \e{invariancewithloss}]
Applying \e{changeofvariableswithT} and \e{L^2norm} as in the proof of \e{Mobius} we modify the $L^p$ norm of the operator as follows.
\begin{align*}
    \int\limits_{-\pi}^{\pi} | T^{(a_n)} f (e^{ix} ) |^p dx =  \int\limits_{-\pi}^{\pi} | ( T^{(a_n)} f )\circ m_b (e^{iu} ) |^p \frac{1- |b|^2}{|1 - \overline{b} e^{iu} |^2 } du \\
    =\int\limits_{-\pi}^{\pi} | T^{(m_{-b} (a_n) )} \Big( \frac{1 }{1-\overline{b} e^{i (\cdot ) }} (f \circ m_b ) \Big) (e^{iu} ) |^p \frac{1 - |b|^2}{ | 1 - \overline{b} e^{iu} |^{2 - p} } du.
\end{align*}
Let $w(u) := \frac{1 - |b|^2}{| 1 - \overline{b} e^{iu} |^{2-p} }$ be the above weight. We claim that the Muckenhoupt's $A_p$ characteristic of $w$ is finite and depends only on $p$, i.e.
\begin{equation}\label{boundedAp}
    [w]_{A_p} \lesssim_p 1.
\end{equation}

\begin{proof}[Proof of \e{boundedAp}]
Assume without a loss of generality that $b = r$ is real and $0 < r < 1$. Also, we know that $\sin x \sim x $ for $x\in [0, \frac{\pi}{2} ]$. Hence, we can write
\begin{align*}
    |1 - \overline{b} e^{iu} |^2 &= |1 - r\cos u - ir\sin u| \\
    &= 1 + r^2 - 2r\cos u = (1-r)^2 + 2r(1-\cos u) \\
    & = (1-r)^2 + 4r \sin ^2 \frac{u}{2} \sim (1-r)^2 + u^2.\numberthis\label{weightequiv}
\end{align*}
Consider an arbitrary interval $(a,b) \subset [-\pi, \pi]$. We need to prove
\begin{equation}\label{Apweight}
    \left( \frac{1}{b-a} \int\limits_a^b w(u) du \right) \left( \frac{1}{b-a} \int\limits_a^b w(u)^{ -\frac{1}{p-1} } du \right)^{p-1} \lesssim_p 1.
\end{equation}
We substitute the weight above with \e{weightequiv}.
\begin{equation}\label{Apweight2}
    \e{Apweight} \lesssim
    \frac{1}{(b-a)^p} \left( \int\limits_a^b ( (1-r)^2 + u^2 )^{ \frac{p-2}{2} } du \right) \left( \int\limits_a^b ( (1-r)^2 + u^2 )^{ -\frac{p-2}{2(p-1)} } du \right)^{p-1}.
\end{equation}
As the integrands on the right-hand side above are even functions, we can restrict our attention to $a \geq 0$. Furthermore, if $b \leq 2 (1-r)$, then $(1-r)^2+u^2$ behaves like a constant on $(a,b)$ and we are done. Otherwise, if $b \geq 2 (1-r)$ then $(1-r)$'s can be neglected and we are left with power weights. The following chain of inequalities finishes the proof.
\begin{align*}
    \e{Apweight2} &\lesssim \frac{1}{(b-a)^p} \left( \int\limits_a^b u^{ p-2 } du \right) \left( \int\limits_a^b u^{ -\frac{p-2}{p-1} } du \right)^{p-1} \\
    &\lesssim \frac{1}{(b-a)^p} \frac{ b^{p-1} - a^{p-1} }{p-1} \left( \frac{ b^{1/(p-1)} - a^{ 1/(p-1) } }{ \frac{1}{p-1} } \right)^{p-1} \lesssim (p-1)^{p-2} \pi^p,
\end{align*}
where in the last inequality we have used the mean value theorem and $b < \pi$.
\end{proof}
The proof can be concluded by an application of Theorems 9.1, 9.2 and 6.4 from  \cite{Kar}. The first two theorems establish the Sparse domination for maximally modulated singular integrals. They claim that there exists a sparse family $\mathcal{S}$ such that
\begin{equation*}
    | T^{(m_{-b} (a_n) )} g (e^{iu} ) | \lesssim \| T^{ (m_{-b} (a_n) ) } \|_{q\to q} \cdot A_{\mathcal{S}, q } g (e^{iu} ),
\end{equation*}
where $A_{\mathcal{S} , q}$ is the corresponding sparse operator with $q$-averages. Theorem 6.4 establishes the boundedness of the sparse operator on $L^{p}(w)$, namely,
\begin{equation*}
    \int\limits_{-\pi}^{\pi} | A_{\mathcal{S},q} g (x)|^p w(x)dx \lesssim \delta(p,q) [w]_{A_p}^{\max (p, p' )} \int\limits_{-\pi}^{\pi} |g (x)|^p w(x)dx.
\end{equation*}
Combining the two estimates with \e{boundedAp} we have
\begin{align*}
    \int\limits_{-\pi}^{\pi} | T^{(a_n)} f (e^{ix} ) |^p dx = \int\limits_{-\pi}^{\pi} | T^{(m_{-b} (a_n) )} \Big( \frac{1 }{1-\overline{b} e^{i (\cdot ) }} (f \circ m_b ) \Big) (e^{iu} ) |^p w(u) du \\
    \lesssim \| T^{( m_{-b} (a_n) ) } \|_{q\to q} \delta(p,q) \int\limits_{-\pi}^{\pi} \frac{1 }{ | 1 -\overline{b} e^{i u } |^p } |(f \circ m_b ) (e^{iu} )|^p w(u) du \\
    = \| T^{( m_{-b} (a_n) ) } \|_{q\to q} \delta(p,q) \int\limits_{-\pi}^{\pi} | f (e^{ix} ) |^p dx,
\end{align*}
where in the last line we have used once again the identity \e{L^2norm}.
\end{proof}

\bibliographystyle{amsalpha}

\bibliography{references}

\end{document}